\newtheorem{thm}{Theorem}[section]
\newtheorem{lem}[thm]{Lemma}
\newtheorem{pro}[thm]{Proposition}
\newtheorem{rem}[thm]{Remark}
\numberwithin{equation}{section}
\begin{document}

\title{\bf Favorite Downcrossing Sites of One-Dimensional Simple Random Walk}
 \author{ Chen-Xu Hao$^1$, Ze-Chun Hu$^1$, Ting Ma$^1$ and Renming Song$^2$\\ \\
 {\small $^1$College of Mathematics, Sichuan  University,
 Chengdu 610065, China}\\
 {\small 476924193@qq.com; zchu@scu.edu.cn; matingting2008@scu.edu.cn }\\ \\
  {\small $^2$Department of Mathematics,
University of Illinois, Urbana, IL 61801, USA}\\
 {\small rsong@illinois.edu}}

\maketitle
\date{}
\noindent{\bf Abstract:}\ Random walk is a very important Markov process and has important applications in many fields.  For a one-dimensional simple symmetric random walk $(S_n)$, a site $x$ is called a favorite downcrossing site at time $n$ if its downcrossing local time at time $n$ achieves the maximum among all sites. In this paper, we study the cardinality of the favorite downcrossing site set, and will show that with probability 1 there are only finitely many times at which there are at least four favorite downcrossing sites and three favorite downcrossing sites occurs infinitely often. Some related open questions will be introduced.

\smallskip

\noindent {\bf Keywords: }\  Random walk, favorite site, favorite edge, favorite downcrossing site, local time.

\section{Introduction and main result}

Let $(S_n)_{n\in\mathbb{N}}$ be a one-dimensional simple symmetric random walk with $S_0=0$. Following T\'{o}th and Werner \cite{TW97}, we define the number of upcrossings and downcrossings of the site $x$ by time $n$ to be
\begin{eqnarray*}
&&\xi_U(x,n):=\#\{0<k\leq n:S_k=x,S_{k-1}=x-1\},\\
&&\xi_D(x,n):=\#\{0<k\leq n:S_k=x,S_{k-1}=x+1\},
\end{eqnarray*}
respectively. Hereafter, $\# D$ denotes the cardinality of the set $D$.  Define the  local times of site $x$ and edge $x$ by
\begin{eqnarray*}
&&\xi(x,n):=\xi_U(x,n)+\xi_D(x,n),\\
&&L(x,n):=\xi_U(x,n)+\xi_D(x-1,n),
\end{eqnarray*}
respectively, where edge $x$ is between points $x-1$ and $x$.

\begin{itemize}
\item A site $x$ is called a favorite (most visited) site of the random walk at time $n$ if
$$
\xi(x,n)=\max_{y\in{\mathbb{Z}}}\xi(y,n).
$$
The set of favorite sites of the random walk at time $n$ is denoted by $\mathcal{K}(n)$. $(\mathcal{K}(n))_{n\ge 1}$ is called the favorite site process of the one-dimensional simple symmetric random walk. We say that {\it three favorite sites} occurs at time $n$ if $\#\mathcal{K}(n)=3$.

\item An edge $x$ is called a favorite  edge of the random walk at time $n$ if
$$
L(x,n)=\sup_{y\in{\mathbb{Z}}}L(y,n).
$$
The set of favorite edges of the random walk at time $n$ is denoted by $\mathcal{E}(n)$. $(\mathcal{E}(n))_{n\ge 1}$ is called the favorite edge process of the one-dimensional simple symmetric random walk.
We say that {\it three favorite edges} occurs at time $n$ if $\#\mathcal{E}(n)=3$.

\item  A site $x$ is called a favorite downcrossing site of the random walk at time $n$ if
$$
\xi_D(x,n)=\max_{y\in{\mathbb{Z}}}\xi_D(y,n).
$$
The set of favorite downcrossing sites of the random walk at time $n$ is denoted by $\mathcal{K}_D(n)$. $(\mathcal{K}_D(n))_{n\ge 1}$ is called the favorite downcrossing site process of the one-dimensional simple symmetric random walk. We say that {\it three favorite downcorssing sites} occurs at time $n$ if $\#\mathcal{K}_D(n)=3$.

\end{itemize}

In \cite[Theorem 1.1]{HHMS21}, we showed that with probability 1 three favorite edges occurs infinitely often, which complements the result in \cite{TW97} (in which the authors showed that eventually there are no more than three favoriate edges), and disproves a conjecture mentioned in
\cite[Remark 1, p. 368]{TW97}. In the proof of \cite[Theorem 1.1]{HHMS21},
we used the transience of the favorite downcrossing site process to show the transience of the favorite edge process. In fact, from \cite{HHMS21},
we can see that there is a close relation between favorite edges and favorite downcrossing sites, which can be regarded as one motivation of this paper.

In this paper, we study the cardinality of the favorite downcrossing site set, and
the main result of this paper is the following

\begin{thm}\label{mainthm-0}
For one-dimensional simple symmetric random walk, with probability 1 there are only finitely many times at which there are at least four favorite downcrossing sites and three favorite downcrossing sites occurs infinitely often.
\end{thm}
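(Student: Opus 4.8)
The plan is to analyze the favorite downcrossing site process through the lens of the downcrossing local time structure, exploiting the fact that $\xi_D(x,n)$ changes by exactly one unit only at the moment of a downcrossing at site $x$ (i.e., a step from $x+1$ to $x$). The key reduction is to observe the process only at the \emph{downcrossing times}, the increasing sequence of times $\tau_1 < \tau_2 < \cdots$ at which a downcrossing occurs somewhere. At such times the maximum downcrossing local time $M_D(n) := \max_y \xi_D(y,n)$ can only increase when the downcrossing happens at a current favorite downcrossing site, and the combinatorial geometry of how $\mathcal{K}_D(n)$ evolves between increments of $M_D$ is governed by the spatial arrangement of sites that have attained or are one step below the current maximum.

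The core of the argument, following the strategy used for favorite edges in \cite{TW97} and \cite{HHMS21}, will be to introduce for each level $k$ the notion of the sites that reach downcrossing local time $k$, and to track the \emph{excursions} of the walk that realize these downcrossings. I would encode the problem in terms of a suitable Markov chain on configurations describing the relative downcrossing counts of the candidate favorite sites (those within one unit of the maximum) together with the walk's current position. First I would show that a fourth favorite downcrossing site can only be created or maintained under a configuration requiring (at least) four sites to be simultaneously within distance one of $M_D$ in downcrossing count, and then estimate the probability that such a rare configuration survives across the random-walk excursions needed to push the maximum up by one. The aim is a Borel--Cantelli bound: let $A_k$ be the event that $\#\mathcal{K}_D(n) \ge 4$ at some time while $M_D = k$; I would prove $\sum_k \mathbb{P}(A_k) < \infty$, whence almost surely only finitely many levels $k$ witness four or more favorite downcrossing sites, and since $M_D(n) \to \infty$ this gives finitely many times with at least four favorite downcrossing sites.

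For the second assertion, that three favorite downcrossing sites occurs infinitely often, the plan is to exhibit, for infinitely many independent ``fresh'' epochs, a positive-probability local pattern of the walk that produces exactly three sites tied at the maximum downcrossing count. Concretely, I would find a finite deterministic sequence of steps (a ``gadget'') starting from a state where two or three sites are balanced at the top, such that after the gadget exactly three sites share $M_D$; using the strong Markov property at a sequence of stopping times where the requisite starting configuration recurs (which it does infinitely often because $M_D \to \infty$ and the relevant configurations are visited with probability bounded below), a second Borel--Cantelli argument (the independence / conditional-independence direction) forces the event to occur infinitely often almost surely.

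**The hard part will be** the first assertion: controlling the probability of four simultaneous favorite downcrossing sites sharply enough to be summable. Unlike the favorite \emph{edge} or favorite \emph{site} settings, the downcrossing local time is a ``one-sided'' count, so the coupling between the walk's excursions and the increments of $\xi_D$ at nearby sites is more rigid, and I expect the delicate step to be showing that maintaining four ties is \emph{asymptotically improbable at rate summable in $k$} rather than merely of small constant probability. I anticipate this requires a careful analysis of the joint law of the increments of $\xi_D$ at four candidate sites over the excursion straddling a level increase, likely reduced to a computation with a finite-state transition structure (an auxiliary Markov chain on the ordered gaps of downcrossing counts among top candidates), together with estimates showing these per-level failure probabilities decay like $O(k^{-1-\varepsilon})$ or are at least summable. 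Establishing that summability, and verifying that the reduction to downcrossing times preserves the event structure, is where the technical weight of the proof lies.
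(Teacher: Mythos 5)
Your plan for the first assertion has the right overall shape --- a first-moment/Borel--Cantelli bound over levels of the maximum is essentially equivalent to the paper's reduction to $\mathbb{E}(f(4))<\infty$, where $f(4)$ counts the downcrossing steps creating a four-way tie --- but the decisive mechanism is missing. The paper does not analyze ``the joint law of the increments of $\xi_D$ at four candidate sites over an excursion'' via an auxiliary chain on gaps; instead it sums over the location $x$ of the \emph{newly created} favorite and the level $h=\xi_D(x,T_D(x,h))$, and invokes the Ray--Knight theorem \cite{Kn63}: at the inverse downcrossing local time $T_D(x,h)$, the profile $y\mapsto \xi_D(y,T_D(x,h))$ is, in the space variable, a patchwork of critical Galton--Watson processes with and without immigration ($Y$, $Z$, $R$). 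This decouples the three spatial regions, and the event ``exactly three other sites are tied at level $h$'' factors into independent events that these branching processes hit level $h$ exactly $p$, $q$, $r$ times with $p+q+r=3$. The summability then comes from three quantitative inputs of T\'oth and T\'oth--Werner \cite{To01,TW97}: the no-overshoot estimate $\mathbb{P}(\text{hit level } h \text{ exactly at } h)\le C h^{-1/2+\varepsilon}$ for each tie, the escape estimate $\mathbb{P}(\text{never return to } h)\le Ch^{-1/2}$, and the expected passage time $\mathbb{E}(\tau_h)\le Ch^{1/2}$, which is what controls the sum over $x$. The product is $O(h^{-3/2+3\varepsilon})$ per level, summable in $h$. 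Your sketch supplies none of these estimates, and your guessed rate ``$O(k^{-1-\varepsilon})$'' does not account for the spatial sum (which costs a factor $h^{1/2}$ and is exactly compensated by the three $h^{-1/2+\varepsilon}$ tie factors); without the branching representation there is no visible route to summability, so the hard part you correctly identify remains unproved.

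For the second assertion your gadget-plus-second-Borel--Cantelli plan contains a genuine gap: the claim that ``the requisite starting configuration recurs with probability bounded below'' is precisely the hard content of Ding--Shen type arguments \cite{DS18} (and of \cite{HHMS21} for edges) --- the events at successive epochs are not independent, the needed configuration is a two-way tie \emph{at the running maximum} with the walk nearby, and establishing uniformly positive conditional probability at infinitely many scales is a substantial theorem, not a remark. The paper avoids all of this: by Proposition \ref{pro-1.2}, every favorite edge $x$ yields a favorite downcrossing site $x-1$, so the previously proved fact that three favorite edges occurs infinitely often \cite{HHMS21} gives $\#\mathcal{K}_D(n)\ge 3$ infinitely often, and combined with the first assertion ($\#\mathcal{K}_D(n)\le 3$ eventually) this forces $\#\mathcal{K}_D(n)=3$ infinitely often. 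As written, your proof of the second assertion would fail at the recurrence claim, and your proof of the first stalls exactly where the Ray--Knight/branching-process machinery is required.
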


For the related problem of the number of favorite sites of one-dimensional simple symmetric random walks, there are many more references (see Shi and T\'{o}th \cite{ST00} for an overview).
This problem was posed by Erd\H{o}s and R\'{e}v\'{e}sz \cite{ER84, ER87, ER91, Re90}. T\'{o}th \cite{To01} proved that there are no more than three favorite sites eventually. Ding and Shen \cite{DS18} proved that with probability 1 three favorite sites occurs infinitely often.

In addition to the study of the number of favorite sites,
a series of papers focus on the asymptotic behavior of favorite sites, see \cite{Ba22, BG85, CRS00, CS98,  LS04}. For papers on favorite sites of simple random walks in  higher dimensions, we refer to \cite{Ab15, De05, DPRZ01, Ok16}.
In addition, there are a number of papers
on favorite sites for other processes
including Brownian motion, symmetric stable process, L\'{e}vy processes, random walks in random environments  and so on, see \cite{BES00, CDH18, Ei89, Ei97, EK02,  HS00, HS15, KL95, Le97, LXY19, Ma01}.

In \cite[Proposition 2.4]{HHMS21}, we proved the following result.

\begin{pro}\label{pro-1.2}
If $x\in \mathcal{E}(n)$, then $x-1\in \mathcal{K}_D(n)$.
\end{pro}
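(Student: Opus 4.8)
The plan is to translate everything into the language of edge crossings and then exploit a parity relation. I would write $u_y := \xi_U(y,n)$ for the number of up-crossings and $d_y := \xi_D(y-1,n)$ for the number of down-crossings of the edge $y$ (between $y-1$ and $y$), so that by definition $L(y,n) = u_y + d_y$. Since the down-crossings of a \emph{site} $z$ are exactly the down-crossings of the \emph{edge} $z+1$, i.e. $\xi_D(z,n) = d_{z+1}$, the claim $x-1\in\mathcal{K}_D(n)$ is equivalent to $d_x = \max_y d_y$. Thus, assuming $x\in\mathcal{E}(n)$, that is $L_x := u_x + d_x = \max_y(u_y + d_y) =: M$, I must show that $d_x \ge d_y$ for every $y$.

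The key structural input is the exact value of the net crossing number $u_y - d_y$. By the standard flow (conservation) argument at each site --- the number of departures from a site equals the number of arrivals up to boundary terms coming from $S_0 = 0$ and $S_n$ --- one obtains $(u_{y+1} - d_{y+1}) - (u_y - d_y) = \mathbf{1}\{y = 0\} - \mathbf{1}\{y = S_n\}$. Equivalently, $u_y - d_y$ is just the net number of times the walk must traverse edge $y$ in going from $0$ to $S_n$: it equals $1$ precisely for $1 \le y \le S_n$ (when $S_n > 0$), equals $-1$ precisely for $S_n < y \le 0$ (when $S_n < 0$), and is $0$ otherwise. The crucial consequence is that $u_y - d_y \in \{0,1\}$ for all $y$ when $S_n \ge 0$, while $u_y - d_y \in \{-1,0\}$ for all $y$ when $S_n \le 0$; in particular this quantity never takes both values $+1$ and $-1$, so it has constant sign.

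With this in hand the conclusion follows from a parity argument. Since $d_y = \tfrac{1}{2}\bigl(L_y - (u_y - d_y)\bigr)$ and $L_y \equiv u_y - d_y \pmod 2$, I would compute $d_x - d_y = \tfrac{1}{2}\bigl[(L_x - L_y) - ((u_x - d_x) - (u_y - d_y))\bigr]$. Consider the case $S_n \ge 0$, so $u_y - d_y \in \{0,1\}$ and the discrepancy $(u_x-d_x)-(u_y-d_y)$ lies in $\{-1,0,1\}$. If it is $\le 0$, then $d_x - d_y \ge \tfrac12(L_x - L_y) \ge 0$ because $L_x = M \ge L_y$; if it equals $1$, then $L_x$ is odd and $L_y$ is even, forcing the nonnegative integer $L_x - L_y$ to be odd and hence $\ge 1$, so again $d_x - d_y \ge 0$. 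The case $S_n \le 0$ is identical with $\{-1,0\}$ in place of $\{0,1\}$, and $S_n = 0$ is immediate since then $u_y = d_y$ and $L_y = 2 d_y$. This yields $d_x \ge d_y$ for all $y$, i.e. $x-1 \in \mathcal{K}_D(n)$.

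The step I expect to be the crux is the constant-sign property of $u_y - d_y$: the parity argument collapses if one edge has $u-d = +1$ while another has $u-d = -1$, since then one could have $L_x = L_y$ with $d_x < d_y$. Thus it is essential to record that all nonzero net-crossings share the sign of $S_n$; everything after that is bookkeeping with the identity $d_y = \tfrac12(L_y-(u_y-d_y))$ and the parity of $L_y$.
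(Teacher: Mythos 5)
Your proof is correct: the net-crossing formula $u_y-d_y=\mathbf{1}\{1\le y\le S_n\}-\mathbf{1}\{S_n<y\le 0\}$ is exactly right (crossings of a fixed edge alternate in direction, with first and last directions determined by the positions of $0$ and $S_n$ relative to the edge), and your parity bookkeeping via $d_y=\tfrac12\bigl(L_y-(u_y-d_y)\bigr)$, including the correctly identified crux that all nonzero net crossings share the sign of $S_n$ (otherwise $L_x=L_y$ with $d_x<d_y$ would be possible), closes the argument completely. Note that the present paper does not reprove this proposition but cites \cite[Proposition~2.4]{HHMS21}, and the argument there rests on the same two deterministic facts --- $|u_y-d_y|\le 1$ with constant sign, plus the parity $L_y\equiv u_y-d_y \pmod 2$, deployed as a proof by contradiction from a hypothetical $y$ with $d_y>d_x$ rather than through your explicit identity --- so your route is essentially the same as the source's.
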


By Proposition \ref{pro-1.2}  and \cite[Theorem 1.1]{HHMS21},
 it is obvious that in order to prove Theorem \ref{mainthm-0}, we need only to prove that with probability 1 there are only finitely many times at which there are at least four favorite downcrossing sites.

For $r\geq 1$, we define
\begin{eqnarray*}
f(r):=\#\left\{n\geq 1: S_n\in \mathcal{K}_D(n),S_n=S_{n-1}-1,  \#\mathcal{K}_D(n)=r\right\},
\end{eqnarray*}
the number of times at which a new favorite downcrossing site appears, tied with  $r-1$ other favorite downcrossing sites. By definition $f(r)\ge f(r+1)$. Then in order to complete the proof of Theorem \ref{mainthm-0}, it is enough to show

\begin{thm}\label{mainthm}
$\mathbb{E}(f(4))<+\infty$.
\end{thm}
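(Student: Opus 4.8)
The plan is to reduce the claim to the summability, over the value $D$ of the maximal downcrossing local time, of the probability that four favorite downcrossing sites are ever present at level $D$, and then to bound that probability through the branching (Ray--Knight) description of the downcrossing local time profile. First I would record the following monotonicity. Set $D_n:=\max_{y\in\mathbb{Z}}\xi_D(y,n)$. Each step of the walk increases exactly one value $\xi_D(y,\cdot)$ by one (on a down-step) and changes none of them (on an up-step), so the profile $y\mapsto\xi_D(y,n)$ is nondecreasing in $n$ and $D_n$ is nondecreasing with unit jumps. Call the time interval on which $D_n=D$ the $D$-th epoch, with left endpoint $R_D:=\inf\{n:D_n=D\}$. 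Throughout this epoch the maximum is frozen at $D$: a site already at level $D$ cannot decrease, and it cannot increase without ending the epoch, so it stays a favorite downcrossing site, while any other site can only climb up to level $D$ and join $\mathcal{K}_D(n)$. Hence on the $D$-th epoch $\#\mathcal{K}_D(n)$ is nondecreasing, equals $1$ at $R_D$, and increases by exactly one whenever a fresh site reaches level $D$. In particular $\#\mathcal{K}_D(n)=4$ produced by a downcrossing (an $f(4)$-event) can occur at most once per epoch, and does so precisely when at least four distinct sites attain downcrossing local time $D$ during that epoch. Denoting this last event by $A_D$,
\[
f(4)=\sum_{D\ge 1}\mathbf{1}_{A_D},\qquad\text{so}\qquad \mathbb{E}(f(4))=\sum_{D\ge 1}\mathbb{P}(A_D),
\]
and it suffices to prove $\sum_{D}\mathbb{P}(A_D)<\infty$.

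Next I would translate $A_D$ into a statement about the profile. Equivalently, $A_D$ asserts that at the first time $R_{D+1}$ the maximum reaches $D+1$, at least four sites carry downcrossing local time $\ge D$; that is, the spatial profile $y\mapsto\xi_D(y,\cdot)$ attains its maximal value $D$ at four or more distinct sites before that value is exceeded. Read off at inverse local times, this profile is, by the discrete Ray--Knight theorem used by T\'oth--Werner \cite{TW97} and in \cite{HHMS21}, a critical Galton--Watson process in the spatial variable (with a mean-one, geometric offspring law); the strong spatial Markov property lets one split it, at the current position of the walk, into two independent half-profiles on the left and on the right. In these terms $A_D$ requires this critical branching profile to climb back to within its top level three extra times.

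The estimate itself is where the content lies. For a critical branching process the probability that its generation sizes ever reach order $D$ is only of order $1/D$; this single factor corresponds to the borderline, nonsummable case, matching the fact that three favorite downcrossing sites occurs infinitely often. The additional requirement that the maximal value be shared by four sites forces three further returns of the profile to its top level, and the diffusive fluctuation of the critical chain near its maximum makes each such return contribute an extra decaying factor. Carrying this out, I expect a summable bound of the form $\mathbb{P}(A_D)\le C\,D^{-(1+\delta)}$ for some $\delta>0$, which yields $\mathbb{E}(f(4))<\infty$.

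The hard part will be precisely this last estimate, and two features demand care. First, the epochs are defined through a first passage of the global maximum, whereas the clean branching description holds at inverse local times; I would reconcile the two by conditioning on the location of the walk and on the profile at $R_{D+1}$ and then using the spatial Markov property to decouple the two half-profiles. Second, one must quantify the cost of the three extra ties sharply enough to secure an exponent strictly larger than one, rather than the merely borderline $1/D$. I would control both points by Green's-function and excursion estimates for the critical chain near its maximum, of the same type underlying the analogous counts of favorite sites in \cite{To01} and of favorite edges in \cite{HHMS21}.
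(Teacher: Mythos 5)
Your epoch reduction is correct, and it is worth saying why: since $\xi_D(\cdot,n)$ never decreases and $D_n$ has unit jumps, $\#\mathcal{K}_D(n)$ is indeed nondecreasing on each epoch, starts at $1$, and an $f(4)$-event occurs at most once per epoch, exactly when a fourth site attains level $D$; hence $\mathbb{E}(f(4))=\sum_{D\ge 1}\mathbb{P}(A_D)$. This is a legitimate regrouping of the paper's decomposition $f(4)=\sum_{x}d(x)$ (the paper's double sum over the tie site $x$ and the level $h$, grouped by $h$ instead of by $x$). But the proposal stops exactly where the theorem begins, and the route you sketch for the remaining estimate does not work as stated. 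The Ray--Knight identities (\ref{RK}) and (\ref{RK4}) describe the downcrossing profile at an inverse local time $T_D(x,h)$ of a \emph{fixed} site $x$; they do not apply at $R_{D+1}$, the first passage time of the \emph{global maximum} of the profile, and conditioning ``on the location of the walk and on the profile at $R_{D+1}$'' does not produce two independent half-profiles with the stated branching laws, because $R_{D+1}$ is defined through the entire profile. The paper's resolution of precisely this difficulty is the step your sketch is missing: use the uniqueness of the fourth-tie event within an epoch to index it by the site $x$ at which it occurs; its time is then $T_D(x,k)$ for the appropriate $k$, a fixed-site inverse local time, where (\ref{RK}) and (\ref{RK4}) split the profile into the independent pieces $Y$, $R$ (or $Z$), and $Y'$, leading to (\ref{Ray-Knight representation-u(x)01}) and (\ref{Ray-Knight representation-d(x)02}).

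Second, your quantitative heuristic is miscalibrated, and the estimates that carry the entire content of the theorem are left unsupplied. Since $D_n$ increases by unit jumps and tends to infinity, every level $D$ is attained by some site with probability one, so the baseline epoch event has probability $1$, not order $1/D$; all of the decay must come from the three extra ties. What makes each tie cost $Ch^{-\frac12+\varepsilon}$ is a family of precise statements about the chains $Y,Y',Z,R$ near level $h$: the probability that, upon next reaching level $\ge h$, the chain equals exactly $h$ is at most $Ch^{-\frac12+\varepsilon}$ (Lemmas \ref{Branching process}, \ref{Branching process1}, \ref{Branching process h-1}, \ref{Lemma 3.3(2)}, \ref{Lemma 3.3(2)'}, \ref{Lemma 3.3(2)h-1}, needed for the several initial states $h$, $h-1$, and arbitrary $l\le h$); the two infinite branches stay $\le h$ at cost $Ch^{-\frac12}$ (Lemmas \ref{Side-lemma 3} and \ref{Side-lemma 3'}); and summing over the location $x$ of the tie costs only a factor $h^{\frac12}$ because $\mathbb{E}(\tau_h|Z_0=k)\le (h-k)+Ch^{\frac12}$ and $\mathbb{E}(\tau_h'|R_0=h)\le Ch^{\frac12}$ (Lemmas \ref{Side-lemma 4} and \ref{E tau'}). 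Assembled as in Lemmas \ref{proof heart1} and \ref{proof heart2}, this yields $Ch^{-\frac32+3\varepsilon}$ per level, which is summable. Note in particular that the statements for the immigration-after-reproduction chain $R$ (Lemmas \ref{E tau'} and \ref{Branching process1}) are not available off the shelf in \cite{To01} or \cite{TW97}: the paper proves them via the martingale $(R_t-t)$, the optional stopping theorem, and a monotonicity argument for the ratios $\rho^*(l+1,\cdot)/\rho^*(l,\cdot)$. Your appeal to ``Green's-function and excursion estimates of the same type'' names the right circle of ideas but proves none of this, so the claimed bound $\mathbb{P}(A_D)\le CD^{-(1+\delta)}$ remains a conjecture within your argument rather than a step of it.
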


 Our proof is inspired by \cite{To01} and \cite{TW97}.  The rest of the paper is organized as follows. In Section 2, we present some preliminaries. In Section 3, we give the proof of Theorem \ref{mainthm}.  In the final section,
 we give some remarks and introduce several open questions.

Throughout the paper, we denote by $C$ some positive constant whose value may vary even within one proof.

\section{Preliminaries}

We define the inverse local times by
\begin{eqnarray*}
T_U(x,k):=\min\{n\geq 1:\xi_U(x,n)=k\}\quad \mbox{and}\quad T_D(x,k):=\min\{n\geq 1:\xi_D(x,n)=k\}.
\end{eqnarray*}
We rewrite $f(4)$ as
$$
f(4)=\sum_{x\in\mathbb{Z}}d(x),
$$
where
\begin{eqnarray*}
d(x)&:=&\sum_{n=1}^{\infty}\textbf{1}_{\{S_{n-1}=x+1,S_n=x,x\in\mathcal{K}_D(n),\#\mathcal{K}_D(n)=4\}}\nonumber\\
&=&\sum_{n=1}^{\infty}\sum_{k=1}^{\infty}\textbf{1}_{\{T_D(x,k)=n,x\in\mathcal{K}_D(n),\#\mathcal{K}_D(n)=4\}}\nonumber\\
&=&\sum_{k=1}^{\infty}\textbf{1}_{\{x\in\mathcal{K}_D(T_D(x,k)),
\#\mathcal{K}_D(T_D(x,k))=4\}}.
\end{eqnarray*}
It follows that
\begin{eqnarray}\label{Ef4}
\mathbb{E}(f(4))=\sum_{x=1}^{+\infty}\mathbb{E}(d(-x))+\sum_{x=0}^{+\infty}\mathbb{E}(d(x)),
\end{eqnarray}
where
\begin{eqnarray*}
\mathbb{E}(d(x))=\sum_{k=1}^{\infty}\mathbb{P}\left(x\in\mathcal{K}_D(T_D(x,k)),
\#\mathcal{K}_D(T_D(x,k))=4\right).
\end{eqnarray*}

\subsection{Branching processes and the Ray-Knight representation}

In the remainder of this paper, we denote by $Y_n$ a critical Galton-Watson branching process with geometric offspring distribution, and by $Z_n,R_n$ two related critical Galton-Watson branching processes with immigration in different ways.
More precisely, the definitions of these processes are as follows: Let $(X_{n, i})_{n, i}$
be i.i.d. geometric variables with mean 1, that is, for all $k\geq 0,P(X_{n,i}=k)=\frac{1}{2^{k+1}}$. We recursively define
\begin{eqnarray}\label{GWbp}
Y_{n+1}=\sum_{i=1}^{Y_n}X_{n,i},\ Z_{n+1}=\sum_{i=1}^{Z_n+1}X_{n,i},\ R_{n+1}=1+\sum_{i=1}^{R_n}X_{n,i}.
\end{eqnarray}
Then $Y_n,Z_n$ and $R_n$ are Markov chains with state space
$\mathbb{N}$ and transition probabilities as follows:
\begin{equation}\label{TP}
\mathbb{P}(Y_{n+1}=j|Y_n=i)=\pi(i,j):=\left\{
\begin{aligned}
&\delta_0(j), &\text{if~$i=0$},\\
&2^{-i-j}\frac{(i+j-1)!}{(i-1)!j!}, &\text{if~$i>0$},
\end{aligned}\right.
\end{equation}
\begin{eqnarray}
\mathbb{P}(Z_{n+1}=j|Z_n=i)=\rho(i,j):=\pi(i+1,j),\\
\mathbb{P}(R_{n+1}=j|R_n=i)=\rho^{*}(i,j):=\pi(i,j-1).\label{2.5}
\end{eqnarray}

These three processes are very useful tools for describing
the local time process of $(S_n)_{n\ge 0}$ in the space variable, taken at certain stopping times. Let $x$ be a fixed integer. We have the following two cases:

{\bf Case 1.} $x\leq -1$. We define the following three processes:

\begin{itemize}
\item $(R_n^{(h)})_{0\leq n\leq -x-1}$ is a Markov chain with transition probabilities $\rho^*(i,j)$ and initial state $R^{(h)}_0=h$;

\item $(Y_n^{(h-1)})_{0\leq n< \infty}$ is a Markov chain with transition probabilities $\pi(i,j)$ and initial state $Y^{(h-1)}_{0}=h-1$;

\item $(Y_n^{'(h)})_{0\leq n< \infty}$ is a Markov chain with transition probabilities $\pi(i,j)$ and initial state $Y_0^{'(h)}=R_{-x-1}^{(h)}$.
\end{itemize}

\noindent We assume that the three processes are independent, except that $(Y_n^{'(h)})$ starts from
$R_{-x-1}^{(h)}$.
We patch the above three processes together to a single process as follows:
\begin{equation}\label{RK1}
\Delta_x^{(h)}(y):=\left\{
\begin{array}{ll}
Y_{y+1}^{'(h)},&\text{if~$-1\le y< +\infty$},\\
R_{y-x}^{(h)}, &\text{if~$x\leq y\le -1$},\\
Y_{x-y}^{(h-1)}, &\text{if~$-\infty<y\le x-1$}.
\end{array}\right.
\end{equation}
By the Ray-Knight theorem (\cite[Theorem~1.1]{Kn63}), we know that
\begin{eqnarray}\label{RK}
(\xi_D(y,T_D(x,h)),y\in\mathbb{Z})\overset{{\rm law}}{=}(\Delta_{x}^{(h)}(y),y\in\mathbb{Z}).
\end{eqnarray}

{\bf Case 2.} $x\ge 0$.  Define the following three processes:

\begin{itemize}

\item $(Z_n^{(h-1)})_{0
\leq n\leq x}$ is a Markov chain with transition probabilities $\rho(i,j)$ and initial state $Z_0^{(h-1)}=h-1$;

\item $(Y_n^{(h)})_{0\leq n<\infty}$ is a Markov chain with transition probabilities $\pi(i,j)$ and initial state $Y_{0}^{(h)}=h$;

\item $(Y_n^{'(h-1)})_{0 \leq n<\infty }$ is a Markov chain with transition probabilities $\pi(i,j)$ and initial state $Y_{0}^{'(h-1)}=Z_{x}^{(h-1)}$.
\end{itemize}

In this case, we patch the three processes together by
\begin{equation}\label{RK3}
\Gamma_x^{(h)}(y):=\left\{
\begin{array}{ll}
Y_{-y}^{'(h-1)}, &\text{if~$y\le 0$},\\
Z_{x-y}^{(h-1)}, &\text{if~$0\le y\le x-1$},\\
Y_{y-x}^{(h)},&\text{if~$y\ge x$}.
\end{array}\right.
\end{equation}
By \cite[Theorem~1.1]{Kn63}, we know that
\begin{eqnarray}\label{RK4}
(\xi_D(y,T_D(x,h)),y\in\mathbb{Z})\overset{{\rm law}}{=}(\Gamma_{x}^{(h)}(y),y\in\mathbb{Z}).
\end{eqnarray}

\begin{rem}
(i) In case 1, if $x=-1$, (\ref{RK1})should be modified as follows:
\begin{eqnarray*}
\Delta_x^{(h)}(y):=\left\{
\begin{array}{ll}
Y_{y+1}^{'(h)},&\text{if~$-1\le y< +\infty$},\\
Y_{x-y}^{(h-1)}, &\text{if~$-\infty<y\le -2$},
\end{array}\right.
\end{eqnarray*}
where $Y_{0}^{'(h)}=h$.

(ii) In case 2, if $x=0$, (\ref{RK3}) should  modified as follows:
\begin{eqnarray*}
\Gamma_x^{(h)}(y):=\left\{
\begin{array}{ll}
Y_{-y}^{'(h-1)}, &\text{if~$y\le -1$},\\
Y_{y-x}^{(h)},&\text{if~$y\ge 0$},
\end{array}\right.
\end{eqnarray*}
where $Y_{0}^{'(h-1)}=h-1$.

(iii) In order to save space, we will not discuss the above two special cases separately in the corresponding proof below.
\end{rem}

Note that the initial state of $(Y_n)$ (resp. $(R_n)$) is $h-1$ (resp. $h$) in the case 1, but the initial state of $(Y_n)$ (resp. $(Z_n)$) is $h$ (resp. $h-1$) in the case 2. For all the notations above, the initial state of other processes $(R_n), (Z_n), (Y_n ') $ other than $(Y_n) $ will be omitted below. We will also use conditional probability $P(\cdot|Y_0=h)$ to indicate the initial state.

\subsection{$\mathbb{E}(d(x))$ under the Ray-Knight representation}

Let $Y_n, Y'_n, Z_n$ and $R_n$ be the branching processes
defined in the above subsection.
For $h\in\mathbb{N}$, define the following stopping times:
\begin{eqnarray}\label{tingshi}
&&\sigma_h:=\min\{n\geq 1: Y_{n}\geq h\},\ \sigma_h':=\min\{n\geq 1: Y_{n}'\geq h\};\nonumber\\
&&\tau_h:=\min\{n\geq 1:Z_n\geq h\},\ \tau_h':=\min\{n\geq 1:R_n\geq h\}.
\end{eqnarray}

{\bf Case 1:}
Let $x\le -1,h\ge 1, p\geq 0$ be integers.  We define
\begin{eqnarray}
&&A_{h,p}:=\left\{\max_{1\le n<\infty}Y_n^{(h-1)}\le h,\#\{1\le n<\infty:Y_n^{(h-1)}=h\}=p\right\},\label{Ray-Knight representation-a}\\
&&A_{h,p}':=\left\{\max_{1\le n<\infty}Y_n'\le h,\#\{1\le n<\infty:Y_n'=h\}=p\right\},\label{Ray-Knight representation-b}\\
&&B_{x,h,p}:=\left\{\max_{1\le n\le -x-1}R_n\le h,\#\{1\le n\le -x-1:R_n=h\}=p\right\}.\label{Ray-Knight representation-c}
\end{eqnarray}
Note that the event $A_{h,p}'$ depends on $x$ through the initial state of $(Y_n')$. In the following, we will use conditional probabilities to indicate the initial state.

By (\ref{RK}), we know  that
\begin{eqnarray}\label{Ray-Knight representation-u(x)}
\mathbb{E}(d(x))&=&\sum_{p+q+r=3}\sum_{h=1}^{+\infty}\sum_{l=0}^{h}\mathbb{P}(A_{h,p}|Y_0=h-1)\cdot \mathbb{P}(B_{x,h,q}\cap\{R_{-x-1}=l\}|R_0=h)\nonumber\\
&&\quad\quad\quad\quad\quad\quad\quad \cdot \mathbb{P}(A_{h,r}'|Y_0'=l).
\end{eqnarray}
For convenience, we substitute ``$-x$" for ``$x$". Then we have
\begin{eqnarray}\label{Ray-Knight representation-u(x)01}
\sum_{x=1}^{\infty}\mathbb{E}\big(d(-x)\big)&\le&\sum_{p+q+r=3}
\sum_{h=1}^{+\infty}\mathbb{P}(A_{h,p}|Y_0=h-1)
\cdot \left(\sum_{x=1}^{\infty}\mathbb{P}(B_{-x,h,q}|R_0=h)\right)\nonumber\\
&&\quad\quad\quad\quad\quad\cdot \left(\sup_{l\in[0,h]}\mathbb{P}(A_{h,r}'|Y_{0}'=l)\right).
\end{eqnarray}

{\bf Case 2:}
Let $x\ge 0, h\ge 1, p\geq 0$ be integers.
We define
\begin{eqnarray}
C_{h,p}&:=&\left\{\max_{1\le n<\infty}Y_n^{(h)}\le h,\#\{1\le n<\infty:Y_n^{(h)}=h\}=p\right\},\label{Ray-Knight representation03-a}\\
C_{h,p}'&:=&\left\{\max_{1\le n<\infty}Y_n'\le h,\#\{1\le n<\infty:Y_n'=h\}=p\right\},\label{Ray-Knight representation03-b}\\
D_{x,h,p}&:=&\left\{\max_{1\le n\le x}Z_n\le h,\#\{1\le n\le x:Z_n=h\}=p\right\}.\label{Ray-Knight representation03-c}
\end{eqnarray}
Note that the event $C_{h,p}'$ depends on $x$ through the initial state of $(Y_n')$.
In the following, we will use conditional probabilities to indicate the initial state.

By (\ref{RK4}), we have that
\begin{eqnarray}\label{Ray-Knight representation-d(x)}
\mathbb{E}(d(x))&=&\sum_{p+q+r=3}\sum_{h=1}^{+\infty}\sum_{l=0}^{h}\mathbb{P}(C_{h,p}|Y_0=h)\cdot \mathbb{P}(D_{x,h,q}\cap\{Z_{x}=l\}|Z_0=h-1)\nonumber\\
&&\quad\quad\quad\quad\quad\quad \cdot \mathbb{P}(C_{h,r}'|Y_0'=l),
\end{eqnarray}
which implies that
\begin{eqnarray}\label{Ray-Knight representation-d(x)02}
\sum_{x=0}^{\infty}\mathbb{E}\big(d(x)\big)&\le&\sum_{p+q+r=3}
\sum_{h=1}^{+\infty}\mathbb{P}(C_{h,p}|Y_0=h)
\cdot \left(\sum_{x=0}^{\infty}\mathbb{P}(D_{x,h,q}|Z_0=h-1)\right)\nonumber\\
&&\quad\quad\quad\quad\quad  \cdot\left(\sup_{l\in[0,h]}\mathbb{P}(C_{h,r}'|Y_{0}'=l)\right).
\end{eqnarray}

\subsection{Some lemmas}

In order to give the proof of Theorem \ref{mainthm} in next section, we need several lemmas.

\begin{lem}(\cite[Side-lemma 3]{To01})\label{Side-lemma 3}
There exists a constant $C<\infty$ such that for any $0\le k\le h$, it holds that
\begin{eqnarray*}
\mathbb{P}(\sigma_h=\infty|Y_0=k)\le \frac{h-k}{h}+Ch^{-\frac12}.
\end{eqnarray*}
\end{lem}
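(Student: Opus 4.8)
The plan is to treat $(Y_n)$ as the critical Galton--Watson martingale it is: the offspring law has mean $1$ and variance $2$, so $(Y_n)$ is a nonnegative martingale that becomes extinct almost surely. Since $\{\sigma_h=\infty\}$ is exactly the event that $(Y_n)$ never reaches level $h$, and extinction is certain, for $k<h$ this event coincides with the event that the chain is absorbed at $0$ before entering $[h,\infty)$. First I would dispose of the trivial boundary cases: if $k=0$ the asserted bound reads $1\le 1+Ch^{-1/2}$, and the case $k=h$ will be reduced to $k<h$ at the very end by conditioning on the first step. So assume $1\le k\le h-1$ and set $T:=\min\{n\ge 0:Y_n=0\text{ or }Y_n\ge h\}$, the exit time from $\{1,\dots,h-1\}$; since $0$ is reached a.s., $T<\infty$ a.s.

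The core identity comes from optional stopping. As $(Y_{n\wedge T})$ is a nonnegative martingale with $\mathbb{E}[Y_{n\wedge T}]=k$, and $Y_n\le h-1$ on $\{T>n\}$ (so $\mathbb{E}[Y_n\mathbf{1}_{\{T>n\}}]\le (h-1)\mathbb{P}(T>n)\to 0$) while $\mathbb{E}[Y_T\mathbf{1}_{\{T\le n\}}]\uparrow \mathbb{E}[Y_T]$ by monotone convergence, I obtain $\mathbb{E}[Y_T]=k$. On $\{T<\infty\}$ the chain exits either through $0$ or by crossing, with $Y_T=0$ in the former case; writing $\{\mathrm{cr}\}$ for the crossing event, $k=\mathbb{E}[Y_T]=h\,\mathbb{P}(\mathrm{cr})+\mathbb{E}[(Y_T-h)\mathbf{1}_{\{\mathrm{cr}\}}]$. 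Since $\{\sigma_h=\infty\}$ is the complement of $\{\mathrm{cr}\}$, this rearranges to
\[
\mathbb{P}(\sigma_h=\infty\mid Y_0=k)=\frac{h-k}{h}+\frac{1}{h}\,\mathbb{E}\big[(Y_T-h)^+\big].
\]
Thus the lemma is reduced to the single estimate $\mathbb{E}[(Y_T-h)^+]\le C\sqrt{h}$ on the expected overshoot of the barrier, uniformly in $k$ and $h$.

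To bound the overshoot I would exploit that it is produced in a single generation. Let $W_i$ denote a sum of $i$ i.i.d.\ copies of the offspring variable (a negative binomial variable of mean $i$ and variance $2i$). One-step conditioning on $\mathcal{F}_{n-1}$ over the event $\{T\ge n\}\in\mathcal{F}_{n-1}$ gives
\[
\mathbb{E}\big[(Y_T-h)^+\big]=\sum_{i=1}^{h-1}G_k(i)\,\psi(i),\qquad \mathbb{P}(\mathrm{cr})=\sum_{i=1}^{h-1}G_k(i)\,\beta(i),
\]
where $G_k(i)=\mathbb{E}[\#\{m<T:Y_m=i\}]$ is the Green's function of the absorbed chain, $\psi(i)=\mathbb{E}[(W_i-h)^+]$ and $\beta(i)=\mathbb{P}(W_i\ge h)$. (A small explicit example shows that the naive hope $\mathbb{E}[(Y_T-h)^+]=\mathbb{E}[\psi(Y_{T-1})]$ is \emph{false}, because $\{Y_{T-1}=i\}$ is not independent of the crossing jump; the Green's-function sum is the correct bookkeeping.) Factoring $\psi(i)=\beta(i)\,\bar{o}(i)$ with $\bar{o}(i)=\mathbb{E}[W_i-h\mid W_i\ge h]$, and using $\mathbb{P}(\mathrm{cr})\le k/h\le 1$, I get
\[
\mathbb{E}\big[(Y_T-h)^+\big]\le \Big(\max_{1\le i\le h-1}\bar{o}(i)\Big)\,\mathbb{P}(\mathrm{cr})\le \max_{1\le i\le h-1}\bar{o}(i),
\]
so everything comes down to the uniform bound $\bar{o}(i)\le C\sqrt h$ for $1\le i\le h-1$.

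The estimate $\bar{o}(i)\le C\sqrt h$ is where the real work lies, and it is the step I expect to be the main obstacle. The conditional overshoot $\bar{o}(i)=\sum_{l\ge1}\mathbb{P}(W_i\ge h+l)\big/\mathbb{P}(W_i\ge h)$ behaves differently in two regimes, both of which must be controlled uniformly. When $i$ is comparable to $h$, the level $h$ sits within $O(\sqrt h)$ of the mean $i$, and the local central limit theorem for $W_i$ (equivalently the explicit ratio $\mathbb{P}(W_i=m+1)/\mathbb{P}(W_i=m)=(i+m)/(2(m+1))$, which yields a Gaussian-type decay $\mathbb{P}(W_i\ge h+l)\lesssim \mathbb{P}(W_i\ge h)\,e^{-cl^2/h}$ of the tail above $h$) gives $\bar{o}(i)=O(\sqrt h)$; note that the cruder log-concavity bound $\bar{o}(i)\le 1/(1-\rho_h)$ from the single ratio $\rho_h$ at $m=h$ only yields $O(h)$ and is insufficient, so the quadratic (Gaussian) curvature must genuinely be used. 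When $i\ll h$, crossing is a large deviation and the exponential right tail of the offspring law forces an $O(1)$ overshoot, trivially $\le C\sqrt h$. Combining the two regimes gives $\mathbb{E}[(Y_T-h)^+]\le C\sqrt h$ and hence the claimed bound for $1\le k\le h-1$. Finally, for $k=h$ I would condition on $Y_1$: on $\{Y_1<h\}$ the Markov property and the case already proved give $\mathbb{P}(\sigma_h=\infty\mid Y_1=j)\le (h-j)/h+Ch^{-1/2}$, and averaging against the mean-$h$ law of $Y_1$ (using $\mathbb{E}[(h-Y_1)^+]\le\sqrt{2h}$) produces a bound of order $h^{-1/2}$, completing the proof.
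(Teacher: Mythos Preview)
The paper does not supply its own proof of this lemma; it is quoted verbatim as Side-lemma~3 of T\'oth~\cite{To01}. So there is no in-paper argument to compare against, and your write-up is effectively a reconstruction of T\'oth's proof.

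Your strategy is the standard and correct one, and it is the same scheme T\'oth uses: exploit that $(Y_n)$ is a nonnegative martingale, apply optional stopping at the exit time $T$ from $\{1,\dots,h-1\}$ to obtain the exact identity
\[
\mathbb{P}(\sigma_h=\infty\mid Y_0=k)=\frac{h-k}{h}+\frac{1}{h}\,\mathbb{E}\big[(Y_T-h)^+\big],
\]
and then control the expected overshoot. Your Green's-function bookkeeping and the reduction to $\max_{1\le i\le h-1}\bar o(i)$ with $\bar o(i)=\mathbb{E}[W_i-h\mid W_i\ge h]$ are correct, and your treatment of the boundary cases $k=0$ and $k=h$ is fine.

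The one step that is not yet a proof is the uniform bound $\bar o(i)\le C\sqrt h$, which you rightly flag as the main obstacle. Two comments. First, your case split ``$i$ comparable to $h$'' versus ``$i\ll h$'' is a bit loose: for, say, $i=h/2$ the level $h$ is \emph{not} within $O(\sqrt h)$ of the mean, yet this $i$ falls in neither extreme. Second, to turn the Gaussian heuristic into an inequality you must pair an upper bound on $\mathbb{P}(W_i\ge h+l)/\mathbb{P}(W_i=h)$ with a matching \emph{lower} bound on $\mathbb{P}(W_i\ge h)/\mathbb{P}(W_i=h)$; using only $\mathbb{P}(W_i\ge h)\ge\mathbb{P}(W_i=h)$ in the denominator loses a factor $\sqrt h$. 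A clean way is to use the explicit ratio $r_m=(i+m)/(2(m+1))$ to get two-sided estimates $\prod_{s=0}^{j-1}r_{h+s}\asymp \exp(-a j-bj^2)$ with $a\asymp (h-i)/h$, $b\asymp 1/h$ for $0\le j\le h$, and then check that
\[
\frac{\sum_{l\ge 1}\sum_{j\ge l}e^{-aj-bj^2}}{\sum_{j\ge 0}e^{-aj-bj^2}}\le \frac{C}{\sqrt b}=C\sqrt h
\]
uniformly in $a\ge 0$; this handles all $1\le i\le h-1$ at once. The present paper performs an essentially identical computation (for the single case $i=h$, level $h-1$) in the final display of the proof of Lemma~\ref{E tau'}, where the analogous ratio is shown to be at most $h+Ch^{1/2}$; that calculation can serve as a template here.
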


By the above lemma and the definitions of the two processes $(Y_n)_{n\geq 0}$ and $(Y_n')_{n\geq 0}$,  we have

\begin{lem}\label{Side-lemma 3'}
There exists a constant $C<\infty$ such that for any $0\le k\le h$, it holds that
\begin{eqnarray*}
\mathbb{P}(\sigma_h'=\infty|Y_0'=k)\le \frac{h-k}{h}+Ch^{-\frac12}.
\end{eqnarray*}
\end{lem}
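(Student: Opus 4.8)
The plan is to recognize that Lemma~\ref{Side-lemma 3'} is simply Lemma~\ref{Side-lemma 3} transported verbatim to the process $(Y_n')$, so that no new estimate is required. The decisive observation is that $(Y_n)_{n\geq 0}$ and $(Y_n')_{n\geq 0}$ evolve according to the \emph{same} one-step transition probabilities $\pi(i,j)$ from (\ref{TP}); they differ only in the way their initial states are produced (one is prescribed directly, whereas $Y_0'$ is the terminal value $R_{-x-1}^{(h)}$ or $Z_x^{(h-1)}$ of an auxiliary chain). Moreover, $\sigma_h'=\min\{n\geq 1:Y_n'\geq h\}$ and $\sigma_h=\min\{n\geq 1:Y_n\geq h\}$ are the very same hitting functional applied to the respective chains.

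First I would condition on the starting value. By construction $(Y_n')$ is independent of the remaining processes apart from its starting point, so conditionally on $\{Y_0'=k\}$ the Markov property gives that the trajectory $(Y_n')_{n\geq 0}$ has exactly the law of $(Y_n)_{n\geq 0}$ conditioned on $\{Y_0=k\}$, both being $\pi$-chains issued from $k$. Since $\sigma_h'$ and $\sigma_h$ are identical measurable functionals of their trajectories, this yields the distributional identity
$$
\mathbb{P}(\sigma_h'=\infty\mid Y_0'=k)=\mathbb{P}(\sigma_h=\infty\mid Y_0=k).
$$
I would then invoke Lemma~\ref{Side-lemma 3} with the same $k$ and $h$ to bound the right-hand side by $\frac{h-k}{h}+Ch^{-1/2}$, obtaining the claim with the very same constant $C$.

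There is essentially no obstacle here. The only point requiring care is that the auxiliary chain producing $Y_0'$ exerts no influence once we condition on $\{Y_0'=k\}$; this is precisely guaranteed by the independence built into the Ray--Knight construction, in which $(Y_n')$ is stipulated to be independent of the other processes except for sharing its initial state. Consequently the lemma follows at once from the identity in law together with Lemma~\ref{Side-lemma 3}.
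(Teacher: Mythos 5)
Your proposal is correct and matches the paper exactly: the paper gives no separate proof for Lemma~\ref{Side-lemma 3'}, deducing it from Lemma~\ref{Side-lemma 3} precisely because $(Y_n')$ is a chain with the same transition probabilities $\pi(i,j)$, so conditioning on $Y_0'=k$ reduces the claim to the earlier estimate. Your extra remark that the auxiliary chain producing $Y_0'$ plays no role once one conditions on the starting value is exactly the independence built into the Ray--Knight construction, as you say.
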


\begin{lem}(\cite[Side-lemma 4]{To01})\label{Side-lemma 4}
There exists a constant $C<\infty$ such that for any $0\le k\le h$, it holds that
\begin{eqnarray*}
\mathbb{E}(\tau_h|Z_0=k)\le (h-k)+Ch^{\frac12}.
\end{eqnarray*}
\end{lem}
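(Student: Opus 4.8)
The plan is to exploit the criticality of the immigration process $(Z_n)$. Since the offspring law $\mathbb{P}(X=k)=2^{-k-1}$ has mean $1$ and variance $2$, the recursion $Z_{n+1}=\sum_{i=1}^{Z_n+1}X_{n,i}$ gives $\mathbb{E}(Z_{n+1}\mid Z_n)=Z_n+1$, so that $M_n:=Z_n-n$ is a martingale with $M_0=k$. The identity $\mathbb{E}(\tau_h\mid Z_0=k)=\mathbb{E}(Z_{\tau_h}\mid Z_0=k)-k$ will then follow from optional stopping, and the whole problem reduces to controlling the overshoot $\mathbb{E}(Z_{\tau_h})-h$ at the first passage of level $h$.

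First I would apply the optional stopping theorem to the bounded stopping time $\tau_h\wedge N$, obtaining $\mathbb{E}(Z_{\tau_h\wedge N}\mid Z_0=k)-\mathbb{E}(\tau_h\wedge N\mid Z_0=k)=k$ for every $N$. Since $\tau_h\wedge N\uparrow\tau_h$, monotone convergence reduces the lemma to a bound $\mathbb{E}(Z_{\tau_h\wedge N}\mid Z_0=k)\le h+Ch^{\frac12}$ that is uniform in $N$; in particular the finiteness of $\mathbb{E}(\tau_h\mid Z_0=k)$ will come out of the argument rather than being assumed beforehand.

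The core estimate is the overshoot bound. Decomposing according to the value $j=Z_{\tau_h-1}$ of the chain just before it crosses $h$, the Markov property shows that on $\{\tau_h=n,\,Z_{n-1}=j\}$ the crossing value $Z_n$ is distributed as a fresh sum $W_j:=\sum_{i=1}^{j+1}X_i$ of $j+1$ i.i.d.\ geometric variables, so the overshoot equals $(W_j-h)^+$. Here $\mathbb{E}(W_j)=j+1$ and $\mathrm{Var}(W_j)=2(j+1)$, and since $j\le h$ the elementary inequality $(W_j-h)^+\le\big(W_j-\mathbb{E}(W_j)\big)^++1$ together with $\mathbb{E}\big[(W_j-\mathbb{E}(W_j))^+\big]=\frac12\mathbb{E}|W_j-\mathbb{E}(W_j)|\le\frac12\sqrt{\mathrm{Var}(W_j)}$ yields $\mathbb{E}\big[(W_j-h)^+\big]\le Ch^{\frac12}$ uniformly in $j$. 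Factoring the $\mathcal{F}_{n-1}$-measurable event out, $\mathbb{E}\big((Z_n-h)\mathbf{1}_{\{\tau_h=n,\,Z_{n-1}=j\}}\big)=\mathbb{P}(\tau_h\ge n,\,Z_{n-1}=j)\,\mathbb{E}\big[(W_j-h)^+\big]$, and summing over $n\le N$ and $j$ collapses the probabilities to $\mathbb{P}(\tau_h\le N)\le 1$, giving $\mathbb{E}(Z_{\tau_h\wedge N})\le h+Ch^{\frac12}$.

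Combining the three steps gives $\mathbb{E}(\tau_h\wedge N\mid Z_0=k)\le(h-k)+Ch^{\frac12}$ for all $N$, and letting $N\to\infty$ finishes the proof. The main obstacle is the overshoot estimate: one must verify that first passage of a critical branching process overshoots level $h$ by only $O(\sqrt{h})$ on average, which is exactly the source of the $h^{\frac12}$ correction term, and one must arrange the pre-crossing decomposition carefully so that the per-term overshoot bound, uniform in the pre-crossing state $j\le h$, can be pulled out of the sum over $n$ and $j$.
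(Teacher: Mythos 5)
Your skeleton (the martingale $M_n=Z_n-n$, optional stopping at the bounded time $\tau_h\wedge N$, and reduction to the expected overshoot at first passage of level $h$) is exactly the route of T\'oth's proof of this lemma, which the paper cites rather than reproduces; its own proof of the analogous Lemma \ref{E tau'} for $(R_n)$ displays the intended technique. However, your final summation step contains a genuine gap. Your identity
\begin{eqnarray*}
\mathbb{E}\big((Z_n-h)\mathbf{1}_{\{\tau_h=n,\,Z_{n-1}=j\}}\big)
=\mathbb{P}(\tau_h\ge n,\,Z_{n-1}=j)\,\mathbb{E}\big[(W_j-h)^+\big]
\end{eqnarray*}
is correct, but the events $\{\tau_h\ge n,\,Z_{n-1}=j\}$ are \emph{not} disjoint in $n$: summing over $n\le N$ and $j$ gives $\sum_{n=1}^{N}\mathbb{P}(\tau_h\ge n)=\mathbb{E}(\tau_h\wedge N)$, not $\mathbb{P}(\tau_h\le N)\le 1$. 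So your estimate only yields $\mathbb{E}\big((Z_{\tau_h}-h)\mathbf{1}_{\{\tau_h\le N\}}\big)\le Ch^{\frac12}\,\mathbb{E}(\tau_h\wedge N)$, and feeding this back into optional stopping produces $\mathbb{E}(\tau_h\wedge N)\big(1-Ch^{\frac12}\big)\le h-k$, which is vacuous for large $h$. The failure is substantive, not notational: states $j$ far below $h$ have large expected occupation time before $\tau_h$ (the total is $\mathbb{E}\tau_h\approx h$ when $k=0$), so a uniform-in-$j$ bound on the \emph{unconditional} overshoot moment $\mathbb{E}[(W_j-h)^+]$ is the wrong quantity to factor out.

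The weights that do sum to at most $1$ are $\mathbb{P}(\tau_h=n,\,Z_{n-1}=j)$, and pairing with them forces you to control the \emph{conditional} overshoot $\mathbb{E}[W_j-h\mid W_j\ge h]=\mathbb{E}[(W_j-h)^+]/\mathbb{P}(W_j\ge h)$ uniformly over $0\le j\le h$. This does not follow from your estimate, since $\mathbb{P}(W_j\ge h)$ is exponentially small when $j\ll h$; one must show that the overshoot conditioned on crossing is nevertheless $O(h^{\frac12})$ for every pre-crossing state, with the worst case at $j$ near $h$ (the CLT regime, where the $\sqrt{\mathrm{Var}(W_j)}\asymp h^{\frac12}$ fluctuation really does enter). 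That is precisely the missing lemma, and it is what the paper's proof of Lemma \ref{E tau'} supplies for the $R$-chain: a likelihood-ratio monotonicity in the pre-crossing state giving
\begin{eqnarray*}
\mathbb{P}(Z_{\tau_h}\ge u)\le \frac{\sum_{w=u}^{\infty}\rho(h,w)}{\sum_{v=h}^{\infty}\rho(h,v)},
\end{eqnarray*}
followed by the explicit computation $\sum_{v\ge h}v\,\rho(h,v)\,/\sum_{v\ge h}\rho(h,v)\le h+Ch^{\frac12}$. Note also that $\sum_{n\le N,\,j}\mathbb{P}(\tau_h\ge n,\,Z_{n-1}=j)\,\mathbb{P}(W_j\ge h)=\mathbb{P}(\tau_h\le N)\le 1$, so your decomposition closes as soon as the uniform conditional overshoot bound is in hand; adding that step (and only that step) repairs the proof.
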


By following \cite[Corollary,  p.  500]{To01},
we can get a similar result on $\tau_h'$ as follows.

\begin{lem}\label{E tau'}
There exists a constant $C<\infty$ such that for any
integer $h\ge 1$,
the following upper bound holds:
\begin{eqnarray}\label{E tau'-1}
\mathbb{E}(\tau_h'|R_0=h)\leq Ch^{\frac12}.
\end{eqnarray}
\end{lem}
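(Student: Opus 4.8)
The plan is to avoid any fresh estimation by reducing the statement for $(R_n)$ to Lemma \ref{Side-lemma 4}, which already controls the analogous passage time $\tau_h$ for $(Z_n)$. The tool I would use is a pathwise coupling. Recall from (\ref{GWbp}) that, when both recursions are driven by the \emph{same} family $(X_{n,i})$ of innovations, one has $Z_{n+1}=\sum_{i=1}^{Z_n+1}X_{n,i}$ and $R_{n+1}=1+\sum_{i=1}^{R_n}X_{n,i}$. If these two chains are started at $R_0=Z_0+1$, I expect the identity $R_n=Z_n+1$ to persist for every $n\ge 0$.

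First I would verify this identity by induction on $n$. Assuming $R_n=Z_n+1$, I compute
\[
R_{n+1}=1+\sum_{i=1}^{R_n}X_{n,i}=1+\sum_{i=1}^{Z_n+1}X_{n,i}=1+Z_{n+1},
\]
so the relation propagates. Intuitively the extra unit carried by $R$ is precisely the immigrant of $R$ that has not yet reproduced, which in $Z$ has already been folded into the reproducing population; this is exactly the discrepancy between the transition laws $\rho^{*}(i,j)=\pi(i,j-1)$ (see (\ref{2.5})) and $\rho(i,j)=\pi(i+1,j)$.

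Next I would specialize to $R_0=h$, so that $Z_0=h-1$ under the coupling. Since $R_n\ge h$ if and only if $Z_n\ge h-1$, the stopping times in (\ref{tingshi}) coincide pathwise: $\tau_h'=\tau_{h-1}$. For $h\ge 2$, Lemma \ref{Side-lemma 4} applied at level $h-1$ with initial state $k=h-1$ then yields
\[
\mathbb{E}(\tau_h'|R_0=h)=\mathbb{E}(\tau_{h-1}|Z_0=h-1)\le \big((h-1)-(h-1)\big)+C(h-1)^{1/2}\le Ch^{1/2},
\]
which is (\ref{E tau'-1}). For $h=1$ one has $R_n\ge 1$ for every $n$ by construction, so $\tau_1'=1$ deterministically and the bound holds after enlarging $C$.

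The only point requiring care is the bookkeeping of the coupling, namely confirming that the shift is exactly $R_n=Z_n+1$ rather than some other offset, and the consequent need to treat $h=1$ separately, since Lemma \ref{Side-lemma 4} is invoked at level $h-1$ (and degenerates for level $0$). I do not anticipate any genuine analytic obstacle: the entire $h^{1/2}$ rate is inherited from Lemma \ref{Side-lemma 4}. Alternatively one could follow T\'oth's generating-function computation for $\mathbb{E}(\tau_h')$ directly, as suggested by the reference to \cite[Corollary, p. 500]{To01}, but the coupling makes repeating that estimate unnecessary.
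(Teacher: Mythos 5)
Your proposal is correct, and it takes a genuinely different route from the paper's. You exploit the fact that the recursions in (\ref{GWbp}) are all driven by the same innovations $(X_{n,i})$, so that the initial relation $R_0=Z_0+1$ propagates inductively to the pathwise identity $R_n=Z_n+1$ for all $n$; equivalently, at the level of kernels, $\rho^*(i,j)=\pi(i,j-1)=\rho(i-1,j-1)$ by (\ref{TP}) and (\ref{2.5}), so $(R_n)$ started from $h$ has the law of $(1+Z_n)$ started from $h-1$. Hence the stopping times of (\ref{tingshi}) satisfy $\tau_h'=\tau_{h-1}$ pathwise, and the bound is inherited from Lemma \ref{Side-lemma 4} applied at level $h-1$ with $k=h-1$, giving $\mathbb{E}(\tau_h'|R_0=h)\le C(h-1)^{1/2}\le Ch^{1/2}$; your separate treatment of $h=1$ (where $R_{n+1}=1+\sum_{i=1}^{R_n}X_{n,i}\ge 1$ forces $\tau_1'=1$) correctly covers the degenerate level $0$, at which Lemma \ref{Side-lemma 4} cannot be invoked. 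The paper instead follows T\'oth's corollary directly: it uses the martingale $(R_t-t)_{t\ge 0}$ and optional stopping to write $\mathbb{E}(\tau_h'|R_0=h)=-h+\sum_{u=h}^{\infty}\mathbb{P}(R_{\tau_h'}\ge u|R_0=h)$ as in (\ref{tau'}), proves the overshoot tail bound (\ref{tau'1}) via a likelihood-ratio monotonicity argument in the kernel $\rho^*$, and concludes with an explicit computation of $\sum_{v\ge h}\rho^*(h,v)v/\sum_{w\ge h}\rho^*(h,w)\le h+Ch^{1/2}$. Your coupling is shorter and essentially estimation-free, reducing the lemma to a result already quoted from \cite{To01}; the paper's argument is self-contained for $(R_n)$ and yields as a by-product a stochastic-domination control of the overshoot $R_{\tau_h'}$, though that extra information is not used elsewhere (the proof of Lemma \ref{Branching process1} invokes Lemma \ref{E tau'} only through the expectation bound (\ref{E tau'-1})).
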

\begin{proof} One can check that  $(R_t-t)_{t\geq 0}$ is a martingale. Then by the optional stopping theorem,  we have
\begin{eqnarray}\label{tau'}
\mathbb{E}(\tau_h'|R_0=h)=\mathbb{E}(R_{\tau_h'}|R_0=h)-h
=-h+\sum_{u=h}^{\infty}\mathbb{P}(R_{\tau_h'}\ge u|R_0=h).
\end{eqnarray}

We claim that
\begin{eqnarray}\label{tau'1}
\mathbb{P}(R_{\tau_h'}\ge u|R_0=h)\le\dfrac{\sum_{w=u}^{\infty}\rho^*(h,w)}{\sum_{v=h}^{\infty}\rho^*(h,v)}.
\end{eqnarray}
Note that, for $1\le h\le u$,
\begin{eqnarray}\label{tau'1-a}
\mathbb{P}(R_{\tau_h'}=u|R_0=h)&=&
\sum_{l=1}^{h-1}\mathbb{P}(R_{\tau_h'}=u,R_{\tau_h'-1}=l|R_0=h)\nonumber\\
&=&\sum_{l=1}^{h-1}\mathbb{P}(R_{\tau_h'-1}=l|R_{0}=h)\mathbb{P}(R_{\tau_h'}
=u|R_{\tau_h'-1}=l,R_{0}=h)\nonumber\\
&=&\sum_{l=0}^{h-1}\mathbb{P}(R_{\tau_h'-1}=l|R_{0}=h)\cdot
\dfrac{\rho^*(l,u)}{\sum_{v=h}^{\infty}\rho^*(l,v)}.
\end{eqnarray}
By (\ref{TP}) and (\ref{2.5}), we know that for any $0\le l<h\le v$, it holds that
\begin{eqnarray*}
\dfrac{\rho^*(l+1,v)}{\rho^*(l,v)}=\dfrac{\pi(l+1,v-1)}{\pi(l,v-1)}=\frac{l+v-1}{2l}<\frac{l+v}{2l}
=\dfrac{\pi(l+1,v)}{\pi(l,v)}=\dfrac{\rho^*(l+1,v+1)}{\rho^*(l,v+1)}.
\end{eqnarray*}
It follows that for any $0\le l<h\le v<w$,
\begin{eqnarray*}
&&\rho^*(l+1,v)\cdot\rho^*(l,w)<\rho^*(l,v)\rho^*(l+1,w),\\
&&\rho^*(l,v)=\frac{2l}{l+v-1}\rho^*(l+1,v)\leq \rho^*(l+1,v),
\end{eqnarray*}
which imply  that for any $0\le l<h\le u$,
\begin{eqnarray*}
\sum_{v=h}^{\infty}\rho^*(l+1,v)\cdot\sum_{w=u}^{\infty}\rho^*(l,w)<\sum_{v=h}^{\infty}\rho^*(l,v)\cdot\sum_{w=u}^{\infty}\rho^*(l+1,w).
\end{eqnarray*}
Then, by (\ref{tau'1-a}), we get
\begin{eqnarray*}
\mathbb{P}(R_{\tau_h'}\ge u|R_{0}=h)\le\max_{l'\in[0,h]}\dfrac{\sum_{w=u}^{\infty}\rho^*(l',w)}{\sum_{v=h}^{\infty}\rho^*(l',v)}
\le\dfrac{\sum_{w=u}^{\infty}\rho^*(h,w)}{\sum_{v=h}^{\infty}\rho^*(h,v)},
\end{eqnarray*}
i.e. (\ref{tau'1}) holds.

By (\ref{tau'1}) and some explicit computations,
we get that
\begin{eqnarray*}\label{lem-2.4-2.20}
\sum_{u=h}^{\infty}\mathbb{P}(R_{\tau_h'}\ge u|R_{0}=h)\le \dfrac{\sum_{u=h}^{\infty}\sum_{w=u}^{\infty}\rho^*(h,w)}
{\sum_{v=h}^{\infty}\rho^*(h,v)}
=\frac{\sum_{v=h}^{\infty}\rho^*(h,v)v}{\sum_{w=h}^{\infty}\rho^*(h,w)}
\leq h+Ch^{1/2},
\end{eqnarray*}
which together with (\ref{tau'}) implies (\ref{E tau'-1}).
\end{proof}

For $(Z_n)_{n\ge 0}$, we have the following result.

\begin{lem} (\cite[Lemma 3.1(ii)]{TW97})\label{Branching process}
For any $\varepsilon>0$, there exists a constant $C(\varepsilon)<\infty$ such that for
any integer $h>0$,
\begin{eqnarray*}
\mathbb{P}(Z_{\tau_h}=h|Z_0=h)\le C(\varepsilon) h^{-\frac12+\varepsilon}.
\end{eqnarray*}
\end{lem}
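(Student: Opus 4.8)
The plan is to reduce $\mathbb{P}(Z_{\tau_h}=h\mid Z_0=h)$ to a Green's-function–weighted sum over the possible ``pre-crossing'' values and then exploit that each one-step increment is a negative binomial whose spread is of order $\sqrt h$. First I would record a renewal identity. Writing $G(l):=\sum_{n\ge 0}\mathbb{P}(Z_n=l,\ \tau_h>n\mid Z_0=h)$ for the expected number of visits to $l$ strictly before the crossing, and decomposing according to the value $Z_{\tau_h-1}=l$ just before the crossing step (a visit to $l$ is followed by a jump landing exactly on $h$ with probability $\rho(l,h)$), the Markov property gives
\begin{eqnarray*}
\mathbb{P}(Z_{\tau_h}=h\mid Z_0=h)=\sum_{l=0}^{h}G(l)\,\rho(l,h).
\end{eqnarray*}
The weights satisfy the global constraint $\sum_{l}G(l)=\sum_{n\ge 0}\mathbb{P}(\tau_h>n)=\mathbb{E}(\tau_h\mid Z_0=h)\le Ch^{1/2}$ by Lemma~\ref{Side-lemma 4} applied with $k=h$; this is the only input about the time scale I need.

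Next I would collect two elementary facts about $\rho(l,\cdot)$, which is the law of a sum of $l+1$ i.i.d.\ mean-$1$ geometric variables, i.e.\ negative binomial with mean $l+1$ and variance $2(l+1)$. A local central limit (or direct Stirling) estimate gives the uniform pointwise bound $\rho(l,h)\le C(l+1)^{-1/2}$, while a Chernoff bound for the right tail gives, for $0\le l\le h$, the moderate-deviation estimate $\rho(l,h)\le C h^{-1/2}\exp\!\big(-(h-l)^2/(4h)\big)$. I then split the sum at $l_0:=h-K\sqrt h$ with $K:=\lceil\sqrt{2\log h}\,\rceil$. Since the tail bound is increasing in $l$ on $\{l<h\}$, the far levels contribute at most $Ch^{-1/2}e^{-K^2/4}\sum_{l}G(l)\le Ce^{-K^2/4}\le Ch^{-1/2}$, using $\sum_l G(l)\le Ch^{1/2}$.

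For the near levels $l\ge l_0$ I would use $\rho(l,h)\le C(l+1)^{-1/2}\le C'h^{-1/2}$ (valid since $l\ge l_0\ge h/2$ for large $h$), so that this part is at most $C'h^{-1/2}\sum_{l\ge l_0}G(l)$, where $\sum_{l\ge l_0}G(l)$ is precisely the expected time the killed walk spends at levels in $[l_0,h]$ before crossing. The crux is to show that this occupation time is of order $K$ rather than the trivial $\mathbb{E}(\tau_h)\asymp\sqrt h$, and I expect this local-time estimate near the absorbing barrier to be the main obstacle. I would obtain it from the diffusion scaling: near $h$ the chain behaves like a walk of step variance $\approx 2h$ and unit drift absorbed at $h$, and for such a process the expected occupation of a window adjacent to the barrier grows only \emph{linearly} in its rescaled width (the Green's function of Brownian motion absorbed at $0$ is $\tfrac{2}{\sigma^2}(a\wedge y)$, so the occupation of a window of rescaled width $K$ started at $O(1)$ from the barrier is of order $K$, not $K^2$). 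Making this rigorous by a discrete excursion / optional-stopping argument over visits to the window yields $\sum_{l\ge l_0}G(l)\le CK$, hence a near-part bound $C h^{-1/2}K\asymp h^{-1/2}\sqrt{\log h}$.

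Combining the two parts gives $\mathbb{P}(Z_{\tau_h}=h\mid Z_0=h)\le C h^{-1/2}\sqrt{\log h}$, and absorbing the logarithmic factor into the exponent produces the stated bound $C(\varepsilon)h^{-1/2+\varepsilon}$ for every $\varepsilon>0$. (The monotone-likelihood-ratio reduction used for the overshoot \emph{tail} in Lemma~\ref{E tau'} does not help here, because for the point mass at the minimal value $h$ the conditional landing probability $\rho(l,h)/\sum_{v\ge h}\rho(l,v)$ is \emph{largest} for small $l$; this is exactly why one is forced to control the occupation of levels near $h$ through the Green's-function estimate above.)
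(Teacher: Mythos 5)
Your strategy is sound and genuinely different from the route this paper takes. Note first that the paper does not prove this lemma at all: it is quoted from T\'oth--Werner, and the in-house model for its proof is the analogous Lemma \ref{Branching process1} for $(R_n)$, which proceeds by a multi-scale stratification of the pre-crossing position: slice $[h-h^{(N+i)/(2N)},h-h^{(N+i-1)/(2N)})$ into $N\asymp 1/\varepsilon$ annuli, bound the probability of reaching depth $h-u$ before crossing by $Ch^{1/2}/(h-u)$ (optional stopping on $R_t-t$, via Lemma \ref{E tau'}), and bound the conditional landing probability $\mathbb{P}(R_1=h\mid R_0=z,\,R_1\ge h)\le 4h^{\alpha-1}$ for $z$ near $h$ by the likelihood-ratio computation on $\rho^*(z,h+k)/\rho^*(z,h)$. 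Your proof replaces this stratification by the exact renewal identity $\mathbb{P}(Z_{\tau_h}=h\mid Z_0=h)=\sum_l G(l)\rho(l,h)$ and splits with Gaussian tail bounds; both arguments pivot on the same single quantitative input $\mathbb{E}(\tau_h\mid Z_0=h)\le Ch^{1/2}$ (Lemma \ref{Side-lemma 4}), and your near/far split at depth $K\sqrt h$, $K\asymp\sqrt{\log h}$, is in effect a sharpened, two-annulus version of the paper's $N$-annulus scheme. What your route buys is the stronger bound $Ch^{-1/2}\sqrt{\log h}$ in place of $C(\varepsilon)h^{-1/2+\varepsilon}$; what the paper's route buys is that it never needs any occupation estimate near the barrier, only the two-line hitting-depth bound. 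Your closing remark about why the monotone-likelihood-ratio sup-over-$l$ trick of Lemma \ref{E tau'} fails for the point mass at $h$ is correct, and is precisely the reason both proofs must weight pre-crossing levels by how often (or whether) they are visited.

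The one genuine soft spot is the step you yourself flag: the occupation bound $\sum_{l\ge l_0}G(l)\le CK$ is asserted from the Brownian Green's function $\frac{2}{\sigma^2}(a\wedge y)$ but not proved. The claim is true -- the correct excursion accounting is that the chain enters the window at rescaled distance $O(1)$ from the \emph{barrier} end, so the first window excursion costs $O(K)$ and ends in absorption with probability $1-O(1/K)$, while the $O(K^2)$-cost regime of repeated bottom re-entries is activated only with probability $O(1/K)$, giving $O(K)$ in total -- but making this rigorous for a chain with unbounded negative-binomial steps, state-dependent variance $2(Z_n+1)$ and unit drift requires overshoot/undershoot control at both window edges plus a second-moment (gambler's-ruin time) martingale argument, i.e.\ work comparable in length to the entirety of the paper's Steps 1--3 for Lemma \ref{Branching process1}. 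You should also record the cheap escape: the lemma as stated does not need the sharp $O(K)$. From any $z\ge l_0=h-K\sqrt h$, the walk crosses $h$ within $m\asymp K^2$ steps with probability bounded below by a constant (drift $m$ plus spread $\sqrt{2hm}\asymp K\sqrt h\ \ge$ the gap), so a block decomposition of time into intervals of length $m$ gives $\sum_{l\ge l_0}G(l)\le CK^2\asymp\log h$, whence a near-field contribution $Ch^{-1/2}\log h$, which still yields $C(\varepsilon)h^{-\frac12+\varepsilon}$. With either closure of that step your argument is complete; the remaining items (the local bound $\rho(l,h)\le C(l+1)^{-1/2}$, the Bernstein-type tail, the exact constant in the exponent, and the inclusion of $l=h$ for the $n=0$ term in $G$) are routine bookkeeping.
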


For $(R_n)_{n\ge 0}$, by following \cite[Lemma 3.1(ii)]{TW97}, we can obtain the following result.

\begin{lem}\label{Branching process1}
For any $\varepsilon>0$, there exists a constant $C(\varepsilon)<\infty$ such that for
any integer $h>0$,
\begin{eqnarray}\label{lem-2.7-a}
P(R_{\tau_h'}=h|R_0=h)\le C(\varepsilon)h^{-\frac12+\varepsilon}.
\end{eqnarray}
\end{lem}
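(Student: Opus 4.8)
The plan is to reduce the statement directly to Lemma \ref{Branching process} by exploiting a pathwise identity between $(R_n)$ and $(Z_n)$. Setting $W_n := R_n - 1$, the recursion $R_{n+1} = 1 + \sum_{i=1}^{R_n} X_{n,i}$ gives
\[
W_{n+1} = R_{n+1} - 1 = \sum_{i=1}^{R_n} X_{n,i} = \sum_{i=1}^{W_n + 1} X_{n,i},
\]
which is exactly the recursion defining $(Z_n)$ in (\ref{GWbp}). Equivalently, at the level of kernels, $\rho^*(i,j) = \pi(i, j-1) = \pi((i-1)+1, j-1) = \rho(i-1, j-1)$, so $(R_n - 1)$ is a Markov chain with transition probabilities $\rho(\cdot,\cdot)$. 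Hence, whenever $R_0 - 1 = Z_0$, the processes $(R_n - 1)_{n\ge 0}$ and $(Z_n)_{n\ge 0}$ have the same law.

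Under this identification I would translate the stopping time and the target event. Since $R_n \ge h \iff W_n \ge h-1$, the stopping time $\tau_h' = \min\{n\ge 1 : R_n \ge h\}$ coincides with $\min\{n \ge 1 : W_n \ge h-1\}$, and $\{R_{\tau_h'} = h\} = \{W_{\tau_h'} = h-1\}$. Starting from $R_0 = h$, i.e. $W_0 = h-1$, this matches exactly the event and stopping time appearing in Lemma \ref{Branching process} with level $h-1$ in place of $h$. Therefore, for $h \ge 2$,
\[
P(R_{\tau_h'} = h \mid R_0 = h) = P\big(Z_{\tau_{h-1}} = h-1 \mid Z_0 = h-1\big) \le C(\varepsilon)(h-1)^{-\frac12 + \varepsilon}.
\]

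It then remains to absorb the shift and treat small $h$. For $0 < \varepsilon < \frac12$ and $h \ge 2$ one has $(h-1)^{-\frac12+\varepsilon} = (1 - h^{-1})^{-\frac12+\varepsilon}\, h^{-\frac12+\varepsilon} \le \sqrt 2\, h^{-\frac12+\varepsilon}$, while for $\varepsilon \ge \frac12$ the bound is trivial since $h^{-\frac12+\varepsilon}\ge 1$; the case $h=1$ is handled directly, as $P(R_{\tau_1'}=1\mid R_0=1)\le 1 = 1^{-\frac12+\varepsilon}$ because $\tau_1' = 1$ and $R_n \ge 1$ always. Enlarging the constant to $C'(\varepsilon) := \max\{\sqrt2\,C(\varepsilon),\, 1\}$ yields (\ref{lem-2.7-a}). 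The only genuinely delicate analytic input---the local-limit estimate showing that a near-critical branching step crosses a level with no overshoot with probability $O(h^{-\frac12+\varepsilon})$---is entirely contained in Lemma \ref{Branching process}, which I am invoking as a black box; so the main thing to verify carefully here is the bookkeeping of the coupling, in particular the simultaneous shift by one in both the state variable and the level, together with the convention $n \ge 1$ in the definition of the stopping times. Alternatively, if one prefers a self-contained argument mirroring \cite[Lemma 3.1(ii)]{TW97}, one would condition on $R_{\tau_h'-1}=l$ and bound the one-step ratio $\rho^*(l,h)/\sum_{v\ge h}\rho^*(l,v)$ exactly as in the proof of Lemma \ref{E tau'}; the shift identity $\rho^*(i,j)=\rho(i-1,j-1)$ shows this reproduces the $Z$-computation verbatim.
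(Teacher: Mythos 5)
Your proof is correct, and it takes a genuinely different (and considerably shorter) route than the paper's. You reduce the lemma to Lemma \ref{Branching process} via the pathwise identity $W_n:=R_n-1$: the recursion $R_{n+1}=1+\sum_{i=1}^{R_n}X_{n,i}$ does turn into $W_{n+1}=\sum_{i=1}^{W_n+1}X_{n,i}$, which is exactly the $(Z_n)$ recursion in (\ref{GWbp}), and your kernel check $\rho^*(i,j)=\pi(i,j-1)=\rho(i-1,j-1)$ confirms it; under $R_0=h$ one has $W_0=h-1$, the stopping time $\tau_h'=\min\{n\ge1:W_n\ge h-1\}$ matches $\tau_{h-1}$ for the $Z$-chain, and $\{R_{\tau_h'}=h\}=\{Z_{\tau_{h-1}}=h-1\}$, so $\mathbb{P}(R_{\tau_h'}=h\mid R_0=h)=\mathbb{P}(Z_{\tau_{h-1}}=h-1\mid Z_0=h-1)\le C(\varepsilon)(h-1)^{-\frac12+\varepsilon}$ for $h\ge2$. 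Your bookkeeping is also accurate: the factor $(1-h^{-1})^{-\frac12+\varepsilon}\le\sqrt2$ for $h\ge2$ and $0<\varepsilon<\frac12$, the trivial regime $\varepsilon\ge\frac12$, and the case $h=1$ via $\tau_1'=1$ (since $R_n\ge1$ always) all check out, and you correctly flag the one delicate point, namely that Lemma \ref{Branching process} is stated for integer levels $h>0$, so the substitution $h\mapsto h-1$ needs $h\ge2$. The paper instead reproves the estimate for $(R_n)$ from scratch by mirroring \cite[Lemma 3.1(ii)]{TW97} in three steps: a one-step no-overshoot bound $\mathbb{P}(R_1=h\mid\{R_0=z\}\cap\{R_1\ge h\})\le 4h^{\alpha-1}$ for $z\in[h-h^\alpha,h]$ obtained from explicit ratios of $\rho^*$ (see (\ref{lem-2.7-d})); an optional-stopping estimate $\mathbb{P}(\theta_u'\le\tau_h'\mid R_0=h)\le Ch^{1/2}/(h-u)$ using the martingale $R_t-t$ and Lemma \ref{E tau'} (see (\ref{lem-2.7-h})); and a layered decomposition of $R_{\tau_h'-1}$ over the intervals $\Delta_i$ with $u_i=h-h^{\frac{N+i}{2N}}$. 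What your reduction buys is brevity and robustness: the entire local-limit analysis is absorbed into the already-quoted black box, and the same shift identity would in fact also yield Lemma \ref{E tau'} immediately from Lemma \ref{Side-lemma 4}, since $\mathbb{E}(\tau_h'\mid R_0=h)=\mathbb{E}(\tau_{h-1}\mid Z_0=h-1)\le C(h-1)^{1/2}$. What the paper's longer argument buys is self-containedness---it does not depend on transporting the statement across the shift---and it produces intermediate estimates for $(R_n)$ in exactly the form used elsewhere in its exposition; but as a proof of (\ref{lem-2.7-a}) itself, your coupling argument is complete and correct.
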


\begin{proof}
We divide the proof into three steps:

{\bf Step 1.} Let $\frac12\le\alpha\le 1$, $h-h^{\alpha}\le z\le h$, and $k\le\sqrt{h}$. We have
\begin{eqnarray*}
&&\dfrac{\rho^*(z,h+k)}{\rho^*(z,h)}=\dfrac{\pi(z,h+k-1)}{\pi(z,h-1)}\\
&&=2^{-k}
\dfrac{(h+z-1)(h+z+1)\cdots(h+z+k-2)}{h(h+1)\cdots(h+k-1)}\\
&&\ge 2^{-k}\cdot\dfrac{(h+z-1)^k}{(h+k-1)^k}\\
&&\ge2^{-k}\cdot\dfrac{(2h-h^{\alpha}-1)^k}{(h+\sqrt{h}-1)^k}\\
&&\ge(1-2h^{\alpha-1})^k,
\end{eqnarray*}
where in the last inequality we used
\begin{eqnarray*}
\frac12\cdot\dfrac{2h-h^{\alpha}-1}{h+\sqrt{h}-1}\ge 1-2h^{\alpha-1}\Leftrightarrow \left(3h^{\alpha}-2\sqrt{h}\right)+4\left(h^{\alpha-\frac12}-h^{\alpha-1}\right)+1\ge 0.
\end{eqnarray*}
Then when $h$ is large enough such that $1-2h^{\alpha-1}\in (0,1)$,  we have
\begin{eqnarray}\label{lem-2.7-b}
&&\dfrac{P(R_0\ge h|R_{-1}=z)}{P(R_0=h|R_{-1}=z)}=\sum_{k=0}^{\infty}\dfrac{\rho^*(z,h+k)}{\rho^*(z,h)}
\geq \sum_{k=0}^{[\sqrt{h}]}\dfrac{\rho^*(z,h+k)}{\rho^*(z,h)}\nonumber\\
&&\ge\sum_{k=0}^{[\sqrt{h}]}(1-2h^{\alpha-1})^k=
\dfrac{1-(1-2h^{\alpha-1})^{\sqrt{h}}}{2h^{\alpha-1}}
\ge\dfrac{1-(1-2h^{\alpha-1})^{\sqrt{h}-1}}{2h^{\alpha-1}}.
\end{eqnarray}
Notice that
\begin{eqnarray}\label{lem-2.7-c}
(1-2h^{\alpha-1})^{\sqrt{h}-1}\le (1-2h^{-\frac12})^{\sqrt{h}-1}\overset{h\to\infty}{\longrightarrow}e^{-2}<\frac12.
\end{eqnarray}
By (\ref{lem-2.7-b}) and (\ref{lem-2.7-c}), there exists $h_0>0$ such that for any $h>h_0$,
\begin{eqnarray}\label{lem-2.7-d}
\mathbb{P}\big(R_1=h|\{R_0=z\}\cap\{R_1\ge h\}\big)\le 4h^{\alpha-1},
\end{eqnarray}
where we used the following inequality
$$
\mathbb{P}\big(R_1=h|\{R_0=z\}\cap\{R_1\ge h\}\big)\leq
\dfrac{P(R_1=h|R_0=z)}{P(R_1\ge h|R_0=z)},
$$
which can be checked by the definition of conditional probability.

{\bf Step 2.} For every $u\ge 0$, define
$$\theta_u':=\inf\{n\ge 0:R_n\le u\}.$$
Recall $(R_t-t)_{t\geq 0}$ is a martingale. For $0\le u\le h$, by the optional stopping theorem, we have
\begin{eqnarray}\label{lem-2.7-e}
\mathbb{E}(R_{\tau_h'}|R_0=h)-\mathbb{E}(\tau_h'|R_0=h)=
\mathbb{E}(R_{\tau_h'\land\theta_u'}|R_0=h)-\mathbb{E}(\tau_h'\land\theta_u'|R_0=h).
\end{eqnarray}
By (\ref{lem-2.7-e}) and Lemma \ref{E tau'}, we have
\begin{eqnarray}\label{lem-2.7-f}
\mathbb{E}\big((R_{\tau_h'}-R_{\theta_u'})\textbf{1}_{\{\theta_u'\leq \tau_h'\}}|R_0=h\big)\le \mathbb{E}(\tau_h'|R_0=h)\le Ch^{\frac12}.
\end{eqnarray}
By $R_{\tau_h'}\ge h, R_{\theta_u'}\le u$ and (\ref{lem-2.7-f}),  we get that
\begin{eqnarray}\label{lem-2.7-g}
\mathbb{P}(\theta_u'\leq \tau_h'|R_0=h)\le\dfrac{Ch^{\frac12}}{h-u}.
\end{eqnarray}
In particular, for $\beta\in[\frac12,1]$,
\begin{eqnarray}\label{lem-2.7-h}
\mathbb{P}(\theta_{h-h^{\beta}}'\leq \tau_h'|R_0=h)\le Ch^{\frac12-\beta}.
\end{eqnarray}

{\bf Step 3.} Fix $\varepsilon>0$ and choose integer $N$ such that $N>\dfrac{1}{2\varepsilon}$. For $i=\{0,1,2,\ldots,N\}$, define
\begin{eqnarray*}
u_i:=h-h^{\frac{N+i}{2N}},
\end{eqnarray*}
and
\begin{eqnarray*}
\Delta_0:=[u_0,h],\ \Delta_i:=[u_i,u_{i-1}),i=1,2,\ldots,N.
\end{eqnarray*}
Then
\begin{eqnarray}\label{Branching01}
\mathbb{P}(R_{\tau_h'}=h|R_0=h)=\sum_{i=0}^{N}
\mathbb{P}\left(\{R_{\tau_h'}=h\}\cap\{R_{\tau_h'-1}\in\Delta_i\}|R_0=h\right).
\end{eqnarray}

$(1)$ For $i\in\{1,\ldots,N\}$, by the strong Markov property, (\ref{lem-2.7-d}) and (\ref{lem-2.7-h}), we get
\begin{eqnarray}\label{Branching02}
&&\mathbb{P}\big(\{R_{\tau_h'}=h\}\cap\{R_{\tau_h'-1}\in\Delta_i\}|R_0=h\big)\nonumber\\
&=&\mathbb{P}\big(\{R_{\tau_h'}=h\}\cap\{R_{\tau_h'-1}\in\Delta_i\}
\cap\{\theta_{u_{i-1}}'\le\tau_h'\}|R_0=h\big)\nonumber\\
&\le&\mathbb{P}(\theta_{u_{i-1}}'\le\tau_h'|R_0=h)\cdot
\sup_{z\in\Delta_i}\mathbb{P}\left(R_1=h|\{R_0=z\}\cap\{R_1\ge h\}\right)\nonumber\\
&\le&ch^{-\frac{N+i-1}{2N}+\frac12}\cdot h^{-\frac{N-i}{2N}}=Ch^{-\frac{1}{2}+\frac{1}{2N}}\nonumber\\
&\le&Ch^{-\frac12+\varepsilon}.
\end{eqnarray}

$(2)$ For $i=0$, $\mathbb{P}\big(\{R_{\tau_h'}=h\}\cap\{R_{\tau_h'-1}\ge u_0\}|R_0=h\big)$ can be bounded directly. Take $z\in[h-\sqrt{h},h]$, then for all $k\in[0,\sqrt{h}]$,
\begin{eqnarray*}
\dfrac{\rho^*(z,h+k)}{\rho^*(z,h)}\ge (1-2h^{-\frac12})^k\ge (1-2h^{-\frac12})^{\sqrt{h}}\ge C.
\end{eqnarray*}
Then summing over $k$,
\begin{eqnarray*}
\dfrac{\mathbb{P}(R_1\ge h|R_0=z)}{\mathbb{P}(R_1=h|R_0=z)}\ge C\sqrt{h},
\end{eqnarray*}
which implies that
\begin{eqnarray*}
\dfrac{\mathbb{P}(R_1=h|R_0=z)}{\mathbb{P}(R_1\ge h|R_0=z)}\le \dfrac{C}{\sqrt{h}}.
\end{eqnarray*}
Again, by the strong Markov property, we have
\begin{eqnarray}\label{Branching03}
&&\mathbb{P}\Big(\{R_{\tau_h'}=h\}\cap\{R_{\tau_{h-1}'}\in[h-h^{\frac12},h)\}|R_0=h\Big)\nonumber\\
&&\le\mathbb{P}\big(R_{\tau_h'-1}\in[h-h^{\frac12},h)|R_0=h\big)\cdot
\sup_{z\in[h-h^{\frac12},h)} \mathbb{P}(R_1=h|R_0=z,R_1\ge h)\nonumber\\
&&\le\sup_{z\in[h-h^{\frac12},h)} \mathbb{P}(R_1=h|R_0=z,R_1\ge h)\nonumber\\
&&\le Ch^{-\frac12}.
\end{eqnarray}

Finally, combining (\ref{Branching01}), (\ref{Branching02}) and (\ref{Branching03}), we get this lemma.
\end{proof}

For $(Z_n)_{n\ge 0}$ with initial state $Z_0=h-1$, we have the following result.

\begin{lem}\label{Branching process h-1}
For any $\varepsilon>0$, there exists a constant $C(\varepsilon)<\infty$ such that for any integer $h>0$,
\begin{eqnarray*}
\mathbb{P}(Z_{\tau_h}=h|Z_0=h-1)\le C(\varepsilon) h^{-\frac12+\varepsilon}.
\end{eqnarray*}
\end{lem}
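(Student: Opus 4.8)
The plan is to reduce this statement about the $Z$-chain started from $h-1$ to a hitting problem for the $R$-chain that was already analyzed in Lemma \ref{Branching process1}, by means of an exact shift coupling. Set $W_n:=Z_n+1$. From the recursion $Z_{n+1}=\sum_{i=1}^{Z_n+1}X_{n,i}$ in (\ref{GWbp}) one reads off $W_{n+1}=1+\sum_{i=1}^{W_n}X_{n,i}$, which is precisely the recursion defining $(R_n)$; hence $(W_n)$ is an $R$-chain started at $W_0=h$. Since $\{Z_n\geq h\}=\{W_n\geq h+1\}$, the stopping time $\tau_h=\min\{n\geq 1:Z_n\geq h\}$ coincides with $\tau_{h+1}'$ for $(W_n)$, and on this event $Z_{\tau_h}=h$ if and only if $W_{\tau_{h+1}'}=h+1$. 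Therefore
\[
\mathbb{P}(Z_{\tau_h}=h\mid Z_0=h-1)=\mathbb{P}(R_{\tau_{h+1}'}=h+1\mid R_0=h),
\]
and it suffices to bound the right-hand side, namely the probability that the $R$-chain started one unit below its target level $h+1$ lands on that level exactly at the first crossing.

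The second step is to run the three-step argument in the proof of Lemma \ref{Branching process1} with the target level $h$ replaced by $g:=h+1$ and initial state $R_0=g-1=h<g$. Decomposing over the value $R_{\tau_g'-1}=z<g$ just before the crossing and over the intervals $\Delta_i$ near $g$ exactly as in (\ref{Branching01}), the contribution of each interval is the product of an overshoot factor and a ``staying-high'' factor. The overshoot estimate of Step 1, of the form $\mathbb{P}(R_1=g\mid\{R_0=z\}\cap\{R_1\geq g\})\le Cg^{\alpha-1}$ for $z\in[g-g^{\alpha},g)$ (cf.\ (\ref{lem-2.7-d})), depends only on the ratios $\rho^*(z,g+k)/\rho^*(z,g)$, so it holds verbatim with $h$ replaced by $g$; the starting value being $g-1$ rather than $g$ enters nowhere in that monotonicity computation.

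The one input that genuinely has to be re-proved is the expected-hitting-time bound feeding Step 2, namely $\mathbb{E}(\tau_g'\mid R_0=h)\le Ch^{1/2}$, since Lemma \ref{E tau'} is stated only for $R_0$ equal to the target. This follows from the same martingale $(R_t-t)_{t\geq 0}$ and optional-stopping computation used in Lemma \ref{E tau'}: writing $\mathbb{E}(\tau_g'\mid R_0=h)=\mathbb{E}(R_{\tau_g'}\mid R_0=h)-h$ and bounding the overshoot sum $\sum_{u\geq g}\mathbb{P}(R_{\tau_g'}\geq u\mid R_0=h)$ by the ratio argument of (\ref{tau'1}) yields again an $O(h^{1/2})$ bound. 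Feeding this into (\ref{lem-2.7-g})--(\ref{lem-2.7-h}) gives $\mathbb{P}(\theta_{g-g^{\beta}}'\le\tau_g'\mid R_0=h)\le Cg^{1/2-\beta}$, and combining the two factors over the $N$ intervals as in (\ref{Branching02})--(\ref{Branching03}) produces the bound $C(\varepsilon)g^{-1/2+\varepsilon}\le C'(\varepsilon)h^{-1/2+\varepsilon}$.

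The main obstacle is therefore not conceptual but technical: one must verify that starting one unit below the target does not degrade the $h^{-1/2+\varepsilon}$ rate. This is settled once the hitting-time estimate $\mathbb{E}(\tau_{h+1}'\mid R_0=h)\le Ch^{1/2}$ is in hand, after which every estimate in the proof of Lemma \ref{Branching process1} transfers unchanged under the harmless replacement $h\mapsto h+1$.
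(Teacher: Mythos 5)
Your proposal is correct, and it is worth recording how it relates to the paper, which in fact prints no proof of this lemma at all: the statement is presented as the start-from-$h-1$ analogue of Lemma \ref{Branching process}, the implicit argument being to rerun the proof of \cite[Lemma 3.1(ii)]{TW97} (as was done for $(R_n)$ in Lemma \ref{Branching process1}) directly for the $Z$-chain with $Z_0=h-1$, the only start-dependent ingredient being the expected hitting time, which Lemma \ref{Side-lemma 4} already supplies for every $0\le k\le h$, in particular for $k=h-1$. You instead transfer the problem to the $R$-chain via the exact shift identity $W_n:=Z_n+1$; this identity is genuinely exact, since $\rho(i-1,j-1)=\pi(i,j-1)=\rho^{*}(i,j)$ by (\ref{TP})--(\ref{2.5}), so $(W_n)$ is an $R$-chain from $h$ and your equality $\mathbb{P}(Z_{\tau_h}=h\mid Z_0=h-1)=\mathbb{P}(R_{\tau_{h+1}'}=h+1\mid R_0=h)$ holds. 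You also correctly recognize that this is not literally Lemma \ref{Branching process1} (there the start equals the target), rerun its three steps with target $g=h+1$, and isolate the one input that needs a fresh argument, namely $\mathbb{E}(\tau_{h+1}'\mid R_0=h)\le Ch^{1/2}$; your martingale re-derivation of it is sound, since the likelihood-ratio monotonicity behind (\ref{tau'1}) applies to all pre-crossing states $l\le g-1$ and optional stopping gives $\mathbb{E}(\tau_g'\mid R_0=g-1)=\mathbb{E}(R_{\tau_g'}\mid R_0=g-1)-(g-1)\le 1+Cg^{1/2}$. Note, though, that your own coupling makes even that re-derivation unnecessary: reading the identity in the other direction, $\mathbb{E}(\tau_{h+1}'\mid R_0=h)=\mathbb{E}(\tau_h\mid Z_0=h-1)\le 1+Ch^{1/2}$ follows at once from Lemma \ref{Side-lemma 4} with $k=h-1$. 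As for what each route buys: the paper's implicit route redoes the kernel computations for $\rho$, whereas your coupling lets every $\rho^{*}$ computation of Lemma \ref{Branching process1} be reused verbatim --- Step 1 and the $i=0$ case depend only on the kernel and not on the start, and in Steps 2--3 the initial state $g-1$ exceeds every threshold $u_{i-1}$, so the events $\{\theta_{u_{i-1}}'\le\tau_g'\}$ still cover the decomposition (\ref{Branching01}) --- at the modest cost of the shifted-start bookkeeping you carried out, ending with $C(\varepsilon)(h+1)^{-\frac12+\varepsilon}\le C'(\varepsilon)h^{-\frac12+\varepsilon}$.
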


\begin{lem}(\cite[Lemma 3.3(ii)]{TW97})\label{Lemma 3.3(2)}
For any $\varepsilon>0$, there exists a constant $C(\varepsilon)<\infty$ such that for
any integer $h>0$,
\begin{eqnarray*}
\mathbb{P}\left([\sigma_h<\infty]\cap[Y_{\sigma_h}=h]|Y_0=h\right)\le C(\varepsilon)h^{-\frac12+\varepsilon}.
\end{eqnarray*}
\end{lem}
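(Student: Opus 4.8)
The plan is to reproduce, with the changes forced by the initial state $Z_0=h-1$, the three-step scheme used to prove Lemma \ref{Branching process1} (and originally Lemma \ref{Branching process}). Two structural facts make this possible and explain why $(Z_n)$ started at $h-1$ behaves like $(R_n)$ started at $h$: first, $\mathbb{E}(Z_{n+1}\mid Z_n)=Z_n+1$, so $(Z_n-n)_{n\ge 0}$ is a martingale; second, the one-step law of $Z$ is the shift $\rho(z,\cdot)=\pi(z+1,\cdot)$ of the $Y$-kernel. Indeed $Z_n+1$ and $R_n$ obey the same recursion, so $\mathbb{P}(Z_{\tau_h}=h\mid Z_0=h-1)$ is an $R$-overshoot probability at the shifted level $h+1$; this is the conceptual reason the argument of Lemma \ref{Branching process1} transfers almost verbatim, the shift entering only through $O(1)$ corrections.

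For Step~1 (the overshoot estimate), for $\tfrac12\le\alpha\le1$, $h-h^{\alpha}\le z\le h$ and $0\le k\le\sqrt h$ I would compute
\[
\frac{\rho(z,h+k)}{\rho(z,h)}=2^{-k}\,\frac{(z+h+1)(z+h+2)\cdots(z+h+k)}{(h+1)(h+2)\cdots(h+k)}\ge(1-2h^{\alpha-1})^{k},
\]
exactly as in Step~1 of Lemma \ref{Branching process1}, the only difference being a harmless $O(1)$ shift of indices. Summing over $0\le k\le[\sqrt h]$ and using $(1-2h^{\alpha-1})^{\sqrt h-1}\to e^{-2}<\tfrac12$ then gives, for all large $h$, the bound $\mathbb{P}(Z_1=h\mid Z_0=z,\,Z_1\ge h)\le 4h^{\alpha-1}$. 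For Step~2 (no deep dip), writing $\theta_u:=\inf\{n\ge 0:Z_n\le u\}$ and applying the optional stopping theorem to $(Z_n-n)$ between $\tau_h$ and $\tau_h\wedge\theta_u$, together with $\mathbb{E}(\tau_h\mid Z_0=h-1)\le 1+Ch^{1/2}$ from Lemma \ref{Side-lemma 4}, I would obtain $\mathbb{P}(\theta_u\le\tau_h\mid Z_0=h-1)\le Ch^{1/2}/(h-1-u)$, and hence $\mathbb{P}(\theta_{h-h^{\beta}}\le\tau_h\mid Z_0=h-1)\le Ch^{1/2-\beta}$ for $\beta\in[\tfrac12,1]$, mirroring Step~2 of Lemma \ref{Branching process1}.

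For Step~3 (decomposition and conclusion), fixing an integer $N>\frac{1}{2\varepsilon}$, setting $u_i=h-h^{(N+i)/(2N)}$ and $\Delta_i=[u_i,u_{i-1})$, I would split $\mathbb{P}(Z_{\tau_h}=h\mid Z_0=h-1)$ according to the value $Z_{\tau_h-1}\in\Delta_i$ for $0\le i\le N$. For $i\ge 1$, the strong Markov property combined with Steps~1 and~2 bounds each term by
\[
\mathbb{P}(\theta_{u_{i-1}}\le\tau_h)\cdot\sup_{z\in\Delta_i}\mathbb{P}(Z_1=h\mid Z_0=z,\,Z_1\ge h)\le Ch^{\frac12-\frac{N+i-1}{2N}}\cdot h^{\frac{N+i}{2N}-1}=Ch^{-\frac12+\frac{1}{2N}}\le Ch^{-\frac12+\varepsilon},
\]
while the $i=0$ term is controlled directly by the overshoot estimate with $z\in[h-\sqrt h,h]$, giving $Ch^{-1/2}$. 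Summing the $N+1$ terms yields the claim.

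The main obstacle is, as in Lemma \ref{Branching process1}, the overshoot estimate of Step~1: one must show that once $Z$ crosses level $h$ from a point $z$ close to $h$, the chance of landing exactly on $h$ is $O(h^{\alpha-1})$, and then the powers of $h$ in Step~3 must balance so that $\mathbb{P}(\theta_{u_{i-1}}\le\tau_h)$ and the overshoot bound combine to $h^{-1/2+1/(2N)}$ uniformly in $i$. Everything else is routine bookkeeping, and the shifted initial state $h-1$ enters only through the $O(1)$ corrections $h-1-u$ and $1+Ch^{1/2}$, which are absorbed into the constants.
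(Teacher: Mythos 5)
Your proposal proves the wrong statement. Everything in it concerns the process with immigration $(Z_n)$, its a.s.\ finite hitting time $\tau_h$, and the quantity $\mathbb{P}(Z_{\tau_h}=h\mid Z_0=h-1)$ --- that is, you have written out a proof of Lemma \ref{Branching process h-1}. The lemma at hand is about the immigration-free critical process $(Y_n)$ started from $Y_0=h$, the possibly infinite stopping time $\sigma_h=\min\{n\ge 1:Y_n\ge h\}$, and the event $[\sigma_h<\infty]\cap[Y_{\sigma_h}=h]$. Nowhere do $Y$, $\sigma_h$, or the restriction to $[\sigma_h<\infty]$ appear in your argument, and neither of your two ``structural facts'' supplies a reduction: the identification of $Z_n+1$ with $R_n$ relates $Z$ to $R$, not to $Y$, and the coincidence of one-step laws $\rho(h-1,\cdot)=\pi(h,\cdot)$ holds only for the first step, after which the kernels $\pi(j,\cdot)$ and $\rho(j,\cdot)=\pi(j+1,\cdot)$ diverge.

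The distinction is not cosmetic, because your Step 2 is exactly the step that breaks for $(Y_n)$. Without immigration the process dies out, $\mathbb{P}(\sigma_h=\infty\mid Y_0=h)\ge \pi(h,0)=2^{-h}>0$, hence $\mathbb{E}(\sigma_h\mid Y_0=h)=\infty$, so there is no analogue of the bound $\mathbb{E}(\tau_h\mid Z_0=h-1)\le 1+Ch^{1/2}$ from Lemma \ref{Side-lemma 4} on which your optional-stopping estimate $\mathbb{P}(\theta_u\le\tau_h)\le Ch^{1/2}/(h-1-u)$ rests; moreover $(Y_n-n)$ is not a martingale --- $(Y_n)$ itself is, since $\mathbb{E}(Y_{n+1}\mid Y_n)=Y_n$. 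A correct adaptation of the three-step scheme would have to apply optional stopping to $Y_{n\wedge\sigma_h\wedge\theta_u}$ directly, control the overshoot $Y_{\sigma_h}-h$ on $[\sigma_h<\theta_u]$ to get a dip estimate of the form $\mathbb{P}(\theta_u\le\sigma_h<\infty\mid Y_0=h)\le C h^{1/2}/(h-u)$, and carry the event $[\sigma_h<\infty]$ through the decomposition in Step 3; this is precisely what is done in \cite[Lemma 3.3(ii)]{TW97}. Note also that the paper itself offers no proof here: the lemma is quoted verbatim from \cite{TW97}. So while your write-up is essentially sound as a proof of Lemma \ref{Branching process h-1} (mirroring the paper's proof of Lemma \ref{Branching process1}), it does not establish the statement in question.
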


By the above lemma and the definitions of the two processes $(Y_n)_{n\geq 0}$ and $(Y_n')_{n\geq 0}$,  we have

\begin{lem}\label{Lemma 3.3(2)'}
For any $\varepsilon>0$, there exists a constant $C(\varepsilon)<\infty$ such that for
any integer $h>0$,
\begin{eqnarray*}
\mathbb{P}\left([\sigma_h'<\infty]\cap[Y_{\sigma_h'}=h]|Y_0'=h\right)\le C(\varepsilon)h^{-\frac12+\varepsilon}.
\end{eqnarray*}
\end{lem}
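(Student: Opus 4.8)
The plan is to transfer the estimate of Lemma \ref{Lemma 3.3(2)} from $(Y_n)$ to $(Y_n')$ without any new computation, exactly as Lemma \ref{Side-lemma 3'} was deduced from Lemma \ref{Side-lemma 3}. The point is a purely distributional one: by the construction in Section 2.1, both $(Y_n)_{n\ge 0}$ and $(Y_n')_{n\ge 0}$ are Markov chains on $\mathbb{N}$ governed by the \emph{same} transition kernel $\pi(i,j)$, and the two stopping times are defined in the same way, $\sigma_h=\min\{n\ge 1:Y_n\ge h\}$ and $\sigma_h'=\min\{n\ge 1:Y_n'\ge h\}$. The processes differ only in how their initial states are coupled to the other auxiliary processes ($R_{-x-1}^{(h)}$ in Case~1, $Z_x^{(h-1)}$ in Case~2) appearing in the Ray-Knight representation.

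First I would observe that this coupling becomes irrelevant once we condition on the deterministic value $Y_0'=h$: given $Y_0'=h$, the conditional law of the entire trajectory $(Y_n')_{n\ge 0}$ coincides with the conditional law of $(Y_n)_{n\ge 0}$ given $Y_0=h$, since both are generated by the same kernel $\pi$ from the same starting point. Consequently the pair $(\sigma_h',Y_{\sigma_h'}')$ has, conditionally on $Y_0'=h$, the same joint law as $(\sigma_h,Y_{\sigma_h})$ conditionally on $Y_0=h$, and in particular the two events $[\sigma_h'<\infty]\cap[Y_{\sigma_h'}'=h]$ and $[\sigma_h<\infty]\cap[Y_{\sigma_h}=h]$ carry the same conditional probability. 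I would then simply write
\begin{eqnarray*}
\mathbb{P}\big([\sigma_h'<\infty]\cap[Y_{\sigma_h'}'=h]\,\big|\,Y_0'=h\big)
&=&\mathbb{P}\big([\sigma_h<\infty]\cap[Y_{\sigma_h}=h]\,\big|\,Y_0=h\big)\\
&\le& C(\varepsilon)h^{-\frac12+\varepsilon},
\end{eqnarray*}
the last inequality being Lemma \ref{Lemma 3.3(2)}, which finishes the argument.

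There is essentially no analytic obstacle here, since all of the quantitative work is already contained in Lemma \ref{Lemma 3.3(2)}. The only step requiring a moment of care is justifying that fixing $Y_0'=h$ genuinely decouples $(Y_n')$ from the rest of the Ray-Knight construction, so that the randomness in the initial state of $(Y_n')$ (through $R_{-x-1}^{(h)}$ or $Z_x^{(h-1)}$) plays no role once that initial state has been set equal to $h$. By the Markov property and the identity of the transition kernels this is immediate, and the present lemma is in effect a cosmetic renaming of Lemma \ref{Lemma 3.3(2)}.
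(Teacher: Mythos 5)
Your proposal is correct and matches the paper's own justification: the paper derives Lemma \ref{Lemma 3.3(2)'} from Lemma \ref{Lemma 3.3(2)} with exactly this observation, namely that $(Y_n')$ evolves under the same kernel $\pi(i,j)$ as $(Y_n)$, so conditioning on $Y_0'=h$ makes the two conditional laws identical. Nothing further is needed.
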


Consider the process $(Y_n)_{n\geq 0}$ with initial state $h-1$, similarly as Lemma \ref{Lemma 3.3(2)}, we have

\begin{lem}\label{Lemma 3.3(2)h-1}
For any $\varepsilon>0$, there exists a constant $C(\varepsilon)<\infty$ such that for
any integer $h>0$,
\begin{eqnarray*}
\mathbb{P}\left([\sigma_h<\infty]\cap[Y_{\sigma_h}=h]|Y_0=h-1\right)\le C(\varepsilon)h^{-\frac12+\varepsilon}.
\end{eqnarray*}
\end{lem}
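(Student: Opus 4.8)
The plan is to treat this as the initial-state-$(h-1)$ version of Lemma \ref{Lemma 3.3(2)} (that is, of \cite[Lemma 3.3(ii)]{TW97}) and to reproduce, for the critical process $(Y_n)$ run from $Y_0=h-1$, the three-step scheme already carried out for $(R_n)$ in the proof of Lemma \ref{Branching process1}. The only structural changes are that the relevant transition kernel is $\pi$ rather than $\rho^{*}$, and that the martingale to be exploited is $(Y_n)$ itself rather than $(R_n-n)$, since $(Y_n)$ has no immigration. A useful preliminary observation is that, starting from $h-1$, the event $\{\sigma_h<\infty\}$ is not rare: a nonnegative martingale started one unit below the barrier $h$ reaches level $\ge h$ with probability of order $1$, so the smallness of the target probability comes entirely from the requirement that the crossing land \emph{exactly} on $h$, exactly as in the $(R_n)$ case where $\tau_h'<\infty$ surely and all of the decay is in $\{R_{\tau_h'}=h\}$.

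First I would establish the one-step landing estimate. For $\tfrac12\le\alpha\le1$ and $z$ in a window of width $h^{\alpha}$ below $h$, the ratio $\pi(z,h+k)/\pi(z,h)$ is bounded below by a geometric factor of the form $(1-ch^{\alpha-1})^{k}$, by the same explicit computation as in Step 1 of the proof of Lemma \ref{Branching process1}. Summing over $k$ and using $\mathbb{P}(Y_1=h\mid Y_0=z,\,Y_1\ge h)\le \mathbb{P}(Y_1=h\mid Y_0=z)/\mathbb{P}(Y_1\ge h\mid Y_0=z)$ then yields $\mathbb{P}(Y_1=h\mid Y_0=z,\,Y_1\ge h)\le Ch^{\alpha-1}$. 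This controls the conditional probability that a crossing jump lands precisely on $h$, and it improves as the pre-crossing state $z$ is pushed further below $h$.

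Next I would decompose according to the value $Y_{\sigma_h-1}$ of the process one step before it crosses level $h$. Partitioning $[0,h)$ into a top slab $\Delta_0=[\,h-\sqrt h,h\,]$ together with slabs $\Delta_i$ at distance of order $h^{(N+i)/(2N)}$ below $h$ for $i=1,\dots,N$, with $N>\tfrac1{2\varepsilon}$, I would bound the contribution of $\{Y_{\sigma_h-1}\in\Delta_i\}$ by the product of a reaching probability and the landing estimate of the previous paragraph. The reaching probability---the probability that, started from $h-1$, the process visits a level $\le u_{i-1}$ before up-crossing $h$---would be controlled by optional stopping applied to the martingale $(Y_n)$ between an intermediate level and $h$, in the role played by the bound $\mathbb{P}(\theta_u'\le\tau_h')\le Ch^{1/2}/(h-u)$ and Lemma \ref{E tau'} in the $(R_n)$ argument. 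Multiplying the two factors, each slab contributes at most $O(h^{-\frac12+\frac1{2N}})$, with the dominant term coming from the top slab $\Delta_0$; summing the $N+1$ slabs and choosing $N$ large gives the claimed $O(h^{-\frac12+\varepsilon})$ bound. The initial state $h-1$ enters only through $\Delta_0$, since $|(h-1)-h|=1\ll\sqrt h$ places the start inside the top window, so the shift by one is harmless on the natural $\sqrt h$ fluctuation scale.

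I expect the reaching estimate in the slab decomposition to be the main obstacle. For $(R_n)$ the immigration drift makes a downward dip unlikely and the clean pair ``$(R_t-t)$ is a martingale'' plus $\mathbb{E}(\tau_h'\mid R_0=h)\le Ch^{1/2}$ (Lemma \ref{E tau'}) is directly available; for the critical $(Y_n)$ the martingale identity $\mathbb{E}(Y_T\mid Y_0=h-1)=h-1$ at a two-sided exit time $T$ does not immediately yield an upper bound on the dip probability, because the upper overshoot of $(Y_n)$ above $h$ is unbounded and must be controlled before the identity becomes usable. This overshoot control is exactly what is carried out in \cite[Lemma 3.3(ii)]{TW97} for the critical process, so the cleanest route is to transcribe that argument with the initial condition shifted from $h$ to $h-1$; since the shift is of smaller order than $\sqrt h$, every estimate there is preserved and the remaining work is a routine adaptation of Steps 1--3 in the proof of Lemma \ref{Branching process1}.
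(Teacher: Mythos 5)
Your proposal is correct and takes essentially the same route as the paper: the paper offers no separate proof of Lemma \ref{Lemma 3.3(2)h-1}, asserting only that it follows ``similarly as Lemma \ref{Lemma 3.3(2)}'', i.e.\ by transcribing the argument of \cite[Lemma 3.3(ii)]{TW97} with the initial state shifted from $h$ to $h-1$, which is exactly the adaptation you outline via the three-step scheme of Lemma \ref{Branching process1}. Your diagnosis that the only genuinely new ingredient is the overshoot control needed to make the dip estimate work for the immigration-free martingale $(Y_n)$ --- supplied by the cited TW97 argument and unaffected by a shift of size $1\ll\sqrt h$ --- is precisely right.
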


\section{Proof of Theorem \ref{mainthm}}

In this section, we give the proof of Theorem \ref{mainthm}. To this end, we need the following two lemmas, whose proofs are put at the end of this section.

\begin{lem}\label{proof heart1}
For any $\varepsilon>0$, there exists a finite constant $C<\infty$ such that:

(i) for any integer $p\ge 0$,
\begin{eqnarray*}
\mathbb{P}(A_{h,p}|Y_0=h-1)\le (Ch^{-\frac{1}{2}+\varepsilon})^{p}\cdot h^{-\frac12};
\end{eqnarray*}

(ii) for any integer $p\ge 0$,
\begin{eqnarray*}
\sum_{x=1}^{\infty}\mathbb{P}(B_{-x,h,p}|R_0=h)\le (Ch^{-\frac{1}{2}+\varepsilon})^{p}\cdot h^{\frac12};
\end{eqnarray*}

(iii)
for any integers $p\ge 1,0\le l\le h$ and $x\le -1$,
\begin{eqnarray*}
\mathbb{P}(A_{h,p}'|Y_0'=l)\le (Ch^{-\frac{1}{2}+\varepsilon})^p.
\end{eqnarray*}
\end{lem}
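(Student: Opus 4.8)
The plan is to prove all three estimates by a single mechanism: decompose each trajectory at its successive \emph{clean} passages to height $h$ --- the instants at which a first passage to level $\ge h$ lands exactly on $h$ rather than overshooting --- and then apply the strong Markov property, bounding every resulting factor by a side-lemma of Section~2. The reason this device is available is that the events $A_{h,p}$, $A_{h,p}'$ and $B_{-x,h,p}$ all impose $\max\le h$, so whenever the process reaches a level $\ge h$ it must in fact equal $h$; this (a) makes each such passage a no-overshoot first-passage event, exactly the kind controlled by Lemmas~\ref{Lemma 3.3(2)}, \ref{Lemma 3.3(2)'} and \ref{Branching process1}, and (b) refreshes the state after each visit to precisely $h$, so those lemmas (all stated with initial value $h$) apply verbatim at every subsequent visit. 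Throughout I use that $(Y_n)$ and $(Y_n')$ are critical and die out, so the terminal ``no further visit'' events are genuine $\{\sigma_h=\infty\}$ events, whereas $(R_n)$ carries immigration and reaches $h$ in finite expected time.

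For part (i), when $p=0$ the event $A_{h,0}$ with $Y_0=h-1$ is exactly $\{\sigma_h=\infty\}$, so Lemma~\ref{Side-lemma 3} with $k=h-1$ gives $\tfrac1h+Ch^{-1/2}\le Ch^{-1/2}$. For $p\ge1$ I would split the path at its $p$ visits to $h$ and use the strong Markov property to write
\begin{align*}
\mathbb{P}(A_{h,p}\mid Y_0=h-1)&=\mathbb{P}(\sigma_h<\infty,\,Y_{\sigma_h}=h\mid Y_0=h-1)\\
&\quad\times\big[\mathbb{P}(\sigma_h<\infty,\,Y_{\sigma_h}=h\mid Y_0=h)\big]^{p-1}\cdot\mathbb{P}(\sigma_h=\infty\mid Y_0=h),
\end{align*}
then bound the three kinds of factors by Lemma~\ref{Lemma 3.3(2)h-1}, Lemma~\ref{Lemma 3.3(2)} and Lemma~\ref{Side-lemma 3} (the last with $k=h$), giving $(Ch^{-1/2+\varepsilon})\cdot(Ch^{-1/2+\varepsilon})^{p-1}\cdot Ch^{-1/2}=(Ch^{-1/2+\varepsilon})^p h^{-1/2}$.

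Part (iii) uses the identical decomposition, but now the first passage starts from an arbitrary $l\in[0,h]$ and is bounded simply by $1$, uniformly in $l$; the $p-1$ intermediate returns are each bounded by Lemma~\ref{Lemma 3.3(2)'}, and the terminal no-return factor by Lemma~\ref{Side-lemma 3'} with $k=h$, which is at most $Ch^{-1/2}\le Ch^{-1/2+\varepsilon}$. The product is $1\cdot(Ch^{-1/2+\varepsilon})^{p-1}\cdot Ch^{-1/2+\varepsilon}=(Ch^{-1/2+\varepsilon})^p$. This explains why (iii) lacks the extra $h^{-1/2}$ appearing in (i): the cost of the first arrival at $h$ has dropped from $Ch^{-1/2+\varepsilon}$ (starting from $h-1$) to $1$ (starting from a free $l$), so the terminal $Ch^{-1/2}$ can be absorbed into the exponent instead.

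The main obstacle is part (ii), where the sum over $x$ must be turned into an expected hitting time. Setting $n=x-1$, I would read $\sum_{x\ge1}\mathbf{1}_{B_{-x,h,p}}$ as the number of windows $[1,n]$ on which $R$ stays $\le h$ and equals $h$ exactly $p$ times. The no-overshoot structure identifies this count, on the event that $R$ makes $p$ clean returns to $h$ before any exceedance, with the value of $\tau_h'$ started from the $p$-th such return (and with $\tau_h'$ from the origin when $p=0$). Taking expectations and factoring at the $p$-th clean return by the strong Markov property yields
\[
\sum_{x=1}^{\infty}\mathbb{P}(B_{-x,h,p}\mid R_0=h)=\mathbb{P}(R\text{ makes }p\text{ clean returns before exceedance}\mid R_0=h)\cdot\mathbb{E}(\tau_h'\mid R_0=h).
\]
I would then bound the first factor by $(Ch^{-1/2+\varepsilon})^p$, iterating Lemma~\ref{Branching process1} (legitimate because each clean return restarts the process exactly at $h$), and the second by $Ch^{1/2}$ via Lemma~\ref{E tau'}, obtaining $(Ch^{-1/2+\varepsilon})^p h^{1/2}$. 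The delicate steps are the exact combinatorial identification of the window count with a single increment of $\tau_h'$, and verifying that the factorization is a genuine equality --- which holds precisely because each clean return places the process at $h$, rendering the future increment independent of the past and distributed as $\tau_h'$ from $h$.
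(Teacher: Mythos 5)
Your proposal is correct and follows essentially the same route as the paper: the same $p=0$ identifications ($A_{h,0}=\{\sigma_h=\infty\}$ via Lemma \ref{Side-lemma 3}, $\sum_x\mathbb{P}(B_{-x,h,0}|R_0=h)=\mathbb{E}(\tau_h'|R_0=h)$ via Lemma \ref{E tau'}), the same strong-Markov factorization at the successive no-overshoot visits to $h$ using Lemmas \ref{Lemma 3.3(2)}, \ref{Lemma 3.3(2)h-1}, \ref{Lemma 3.3(2)'} and \ref{Branching process1}, and the same trick in (iii) of bounding the first passage from $l$ by $1$ and absorbing the terminal $Ch^{-1/2}$ into one factor of $Ch^{-1/2+\varepsilon}$. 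The telescoping of the sum over $x$ in (ii) that you flag as delicate is exactly the step the paper performs (as an iterated equality reducing $p$ to $p-1$), so there is no gap.
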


\begin{lem}\label{proof heart2}
For any $\varepsilon>0$, there exists a finite constant $C<\infty$ such that:

(i)
for any integer $p\geq 0$,
\begin{eqnarray*}
\sum_{x=1}^{\infty}\mathbb{P}(C_{x,p}|Y_0=h)\le (Ch^{-\frac12+\varepsilon})^{p}\cdot h^{-\frac12};
\end{eqnarray*}

(ii)
for any integer $p\geq 0$,
\begin{eqnarray*}
\sum_{x=1}^{\infty}\mathbb{P}(D_{x,h,q}|Z_0=h-1)\le (Ch^{-\frac12+\varepsilon})^{p}\cdot h^{\frac12};
\end{eqnarray*}

(iii)
for any integer $p\geq 1,0\leq l\leq h$ and $x\ge 0$,
\begin{eqnarray*}
\mathbb{P}(C_{h,p}'|Y_0'=l)\le C(h^{-\frac12+\varepsilon})^{p}.
\end{eqnarray*}
\end{lem}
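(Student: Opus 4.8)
The plan is to prove Lemma \ref{proof heart2} by exploiting the structural parallel with Lemma \ref{proof heart1}, treating the three parts separately but using the same underlying estimates on the branching processes with immigration. The key observation is that each of the events $C_{h,p}$, $D_{x,h,q}$, $C_{h,p}'$ records that a process stays at or below level $h$ and hits the level $h$ exactly $p$ (resp. $q$) times; so the exponent $p$ in the bound $(Ch^{-1/2+\varepsilon})^p$ should come from requiring $p$ separate ``returns to $h$ followed by an eventual drop below $h$,'' each of which costs a factor of order $h^{-1/2+\varepsilon}$ by the one-step estimates already proved.

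First I would prove part (iii). Conditioned on $Y_0'=l\le h$, the event $C_{h,p}'$ with $p\ge 1$ requires that $(Y_n')$ first reach level $h$ (so $\sigma_h'<\infty$) and then, on each of the $p$ visits to $h$, the next excursion return to $h$ without exceeding it, with a final excursion never returning. By the strong Markov property applied at the successive hitting times of $h$, the probability factorizes into at most $p$ factors each bounded by $\mathbb{P}([\sigma_h'<\infty]\cap[Y_{\sigma_h'}=h]\mid Y_0'=h)$, which Lemma \ref{Lemma 3.3(2)'} controls by $C(\varepsilon)h^{-1/2+\varepsilon}$; the initial factor reaching $h$ from $l$ is at most $1$. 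This gives exactly $\mathbb{P}(C_{h,p}'\mid Y_0'=l)\le (Ch^{-1/2+\varepsilon})^p$.

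Next I would handle parts (i) and (ii), which involve summing over the space variable $x$ and hence need the extra $h^{\pm 1/2}$ factors. For (i), the process $(Y_n^{(h)})$ does not depend on $x$, so $\sum_x \mathbb{P}(C_{x,p}\mid Y_0=h)$ is really a statement about the law of the spatial location; I would express the sum so that it reduces to controlling the probability that $(Y_n)$ started at $h$ achieves its first passage structure, using Lemma \ref{Lemma 3.3(2)} to extract the $p$ return factors and Lemma \ref{Branching process} (giving $\mathbb{P}(Z_{\tau_h}=h)\le C h^{-1/2+\varepsilon}$, equivalently a first-passage overshoot estimate for $Y$) to supply the remaining $h^{-1/2}$. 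For (ii), the process $(Z_n)$ is run only up to time $x$, and summing over $x\ge 0$ turns the probability bound into an expected-time bound: the sum $\sum_x \mathbb{P}(D_{x,h,q}\mid Z_0=h-1)$ is comparable to $\mathbb{E}(\tau_h\mid Z_0=h-1)$ weighted by the $q$ return factors, and Lemma \ref{Side-lemma 4} gives $\mathbb{E}(\tau_h\mid Z_0=k)\le (h-k)+Ch^{1/2}$, which with $k=h-1$ yields the $h^{1/2}$ factor, while Lemma \ref{Branching process h-1} supplies each of the $q$ factors of size $h^{-1/2+\varepsilon}$.

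I expect the main obstacle to be part (ii): converting the spatial sum $\sum_{x=0}^\infty \mathbb{P}(D_{x,h,q}\mid Z_0=h-1)$ into a clean expected-hitting-time estimate while simultaneously accounting for the $q$ required visits to level $h$. The difficulty is that $D_{x,h,q}$ couples the time horizon $x$ to both the ``stay below $h$'' constraint and the ``exactly $q$ visits'' constraint, so one cannot simply bound by $\mathbb{E}(\tau_h)$ directly; instead I would decompose over the times of the $q$ visits to $h$ via the strong Markov property, peel off $q-1$ return factors each bounded by Lemma \ref{Branching process h-1} (or Lemma \ref{Branching process}) using $\mathbb{P}(Z_{\tau_h}=h)\le Ch^{-1/2+\varepsilon}$, and bound the remaining summation over $x$ of the surviving ``below $h$'' event by the expected excursion length, invoking Lemma \ref{Side-lemma 4}. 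Getting the bookkeeping right so that precisely $q$ factors of $h^{-1/2+\varepsilon}$ and exactly one factor of $h^{1/2}$ emerge is the delicate point, but it mirrors the corresponding argument for Lemma \ref{proof heart1}(ii) with $(R_n)$ replaced by $(Z_n)$ and the initial state adjusted to $h-1$.
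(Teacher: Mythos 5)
Your overall strategy---decomposing each event by the strong Markov property at the successive visits to level $h$, peeling off one factor of order $h^{-\frac12+\varepsilon}$ per visit via the overshoot lemmas, and converting the spatial sum in (ii) into an expected hitting time via Lemma \ref{Side-lemma 4}---is exactly the paper's proof, and your treatments of (ii) and (iii) match it step for step: the paper peels off $\mathbb{P}(Z_{\tau_h}=h|Z_0=h-1)$ once by Lemma \ref{Branching process h-1}, then $\left(\mathbb{P}(Z_{\tau_h}=h|Z_0=h)\right)^{p-1}$ by Lemma \ref{Branching process}, and bounds the residual sum by $\mathbb{E}(\tau_h|Z_0=h)\le Ch^{\frac12}$; in (iii) the first passage from $l$ is bounded by $1$ and the terminal factor is $\mathbb{P}(\sigma_h'=\infty|Y_0'=h)\le Ch^{-\frac12}$ by Lemma \ref{Side-lemma 3'}. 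Your factor bookkeeping has small slips (``$q-1$'' versus $q$ return factors in (ii); in (iii), a return after ``each of the $p$ visits'' plus a final excursion would produce $p+1$ visits), but the exponents you assert are the correct ones and the slips are cosmetic.

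The one genuine defect is in part (i). The trailing $h^{-\frac12}$ there is not an overshoot factor and cannot come from Lemma \ref{Branching process}: that lemma concerns the immigration process $(Z_n)$, not $(Y_n)$, and in any case yields only $h^{-\frac12+\varepsilon}$, not $h^{-\frac12}$. In the paper the $p=0$ case is handled by observing that $C_{h,0}=\{\sigma_h=\infty\}$, so $\mathbb{P}(C_{h,0}|Y_0=h)=\mathbb{P}(\sigma_h=\infty|Y_0=h)\le Ch^{-\frac12}$ by Lemma \ref{Side-lemma 3} with $k=h$; that is, the $h^{-\frac12}$ is the no-return probability of the final excursion---the very mechanism you invoke correctly in part (iii) via Lemma \ref{Side-lemma 3'}. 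Relatedly, you need not massage the sum over $x$ in (i): the event there does not depend on $x$ (the ``$C_{x,p}$'' and the summation in the statement are typos for the single probability $\mathbb{P}(C_{h,p}|Y_0=h)$, which is all that is used in (\ref{Ray-Knight representation-d(x)02}) and in the proof of Theorem \ref{mainthm}), so your attempt to reinterpret the sum ``as a statement about the law of the spatial location'' has no content and should simply be dropped. With the citation corrected to Lemma \ref{Side-lemma 3} and the spurious sum removed, your argument for (i) coincides with the paper's.
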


\bigskip

\noindent {\bf Proof of Theorem \ref{mainthm}.}  By (\ref{Ef4}), it is enough to show that \begin{eqnarray}\label{proof-the-main-thm-a}
\sum_{x=1}^{\infty}\mathbb{E}(d(-x))+\sum_{x=0}^{\infty}\mathbb{E}(d(x))<\infty.
\end{eqnarray}

By Lemma \ref{proof heart1}, we can bound the term
\begin{eqnarray}\label{proof-the-main-thm-b}
\sum_{p+q+r=3}&&\mathbb{P}(A_{h,p}|Y_0=h-1)\cdot
\left(\sum_{x=1}^{\infty}\mathbb{P}(B_{-x,h,q}|R_0=h)\right)\cdot
\left(\sup_{l\in[0,h]}\mathbb{P}(A_{h,r}'|Y_0'=l)\right).
\end{eqnarray}
If $r=0$ and $p+q=3$, (\ref{proof-the-main-thm-b}) is controlled by
\begin{eqnarray*}
C(h^{-\frac12+\varepsilon})^{p}\cdot h^{-\frac12}\cdot(h^{-\frac12+\varepsilon})^{q}\cdot h^{\frac12}=Ch^{-\frac32+3\varepsilon}.
\end{eqnarray*}
If $r>0$ and $p+q+r=3$, (\ref{proof-the-main-thm-b}) is controlled by
\begin{eqnarray*}
C(h^{-\frac12+\varepsilon})^{p}\cdot h^{-\frac12}\cdot(h^{-\frac12+\varepsilon})^{q}\cdot h^{\frac12}\cdot (h^{-\frac12+\varepsilon})^{r}=Ch^{-\frac32+3\varepsilon}.
\end{eqnarray*}
Then by (\ref{Ray-Knight representation-u(x)01}), we get that
\begin{eqnarray}\label{proof-the-main-thm-c}
\sum_{x=1}^{\infty}\mathbb{E}(d(-x))<\infty.
\end{eqnarray}

By Lemma \ref{proof heart2}, we can bound the term
\begin{eqnarray}\label{proof-the-main-thm-d}
\sum_{p+q+r=3}\mathbb{P}(C_{h,p}|Y_0=h)
\cdot \left(\sum_{x=0}^{\infty}\mathbb{P}(D_{x,h,q}|Z_0=h-1)\right)\cdot\left(\sup_{l\in[0,h]}\mathbb{P}(C_{h,r}'|Y_{0}'=l)\right).
\end{eqnarray}
If $r=0$ and $p+q=3$, (\ref{proof-the-main-thm-d}) is controlled by
\begin{eqnarray*}
C(h^{-\frac12+\varepsilon})^{p+q}=Ch^{-\frac32+3\varepsilon}.
\end{eqnarray*}
If $r>0$ and $p+q+r=3$, (\ref{proof-the-main-thm-d}) is controlled by
\begin{eqnarray*}
C(h^{-\frac12+\varepsilon})^{p+q}\cdot (h^{-\frac12+\varepsilon})^{r}=Ch^{-\frac32+3\varepsilon}.
\end{eqnarray*}
Then by (\ref{Ray-Knight representation-d(x)02}), we get that
\begin{eqnarray}\label{proof-the-main-thm-e}
\sum_{x=0}^{\infty}\mathbb{E}(d(x))<\infty.
\end{eqnarray}
(\ref{proof-the-main-thm-c}) and (\ref{proof-the-main-thm-e}) imply (\ref{proof-the-main-thm-a}).\hfill\fbox

\bigskip

\noindent {\bf Proof of Lemma \ref{proof heart1}.} (i) For $p=0$,  by (\ref{Ray-Knight representation-a}) and  Lemma \ref{Side-lemma 3}, we have
\begin{eqnarray*}
\mathbb{P}(A_{h,0}|Y_0=h-1)=\mathbb{P}(\sigma_h=\infty|Y_0=h-1)\leq Ch^{-\frac{1}{2}}.
\end{eqnarray*}
In this case (i) holds. By (\ref{Ray-Knight representation-a}) and  Lemma \ref{Side-lemma 3}, we also have
\begin{eqnarray}\label{proof-lem-3.1-a}
\mathbb{P}(A_{h,0}|Y_0=h)=\mathbb{P}(\sigma_h=\infty|Y_0=h)\leq Ch^{-\frac{1}{2}}.
\end{eqnarray}
For $p\ge 1$, by the strong Markov property of $(Y_n)$, Lemma \ref{Lemma 3.3(2)}, Lemma \ref{Lemma 3.3(2)h-1} and (\ref{proof-lem-3.1-a}) we have
\begin{eqnarray*}
&&\mathbb{P}(A_{h,p}|Y_0=h-1)\\
&&=\mathbb{P}
\left(\{\sigma_h<\infty\}\cap\{Y_{\sigma_h}=h\}|Y_0=h-1\right)\cdot \mathbb{P}(A_{h,p-1}|Y_0=h)\\
&&=\cdots\\
&&=\mathbb{P}
\left(\{\sigma_h<\infty\}\cap\{Y_{\sigma_h}=h\}|Y_0=h-1\right)\cdot\left(\mathbb{P}
\left(\{\sigma_h<\infty\}\cap\{Y_{\sigma_h}=h\}|Y_0=h\right)\right)^{p-1}\\
 &&\quad\cdot \mathbb{P}(A_{h,0}|Y_0=h)\\
&&\leq (Ch^{-\frac{1}{2}+\varepsilon})^{p}\cdot h^{-\frac12},
\end{eqnarray*}
which implies (i).

(ii) For $p=0$, by (\ref{Ray-Knight representation-c}) and  Lemma \ref{E tau'}, we have
\begin{eqnarray}\label{proof-lem-3.1-b}
\sum_{x=1}^{\infty}\mathbb{P}(B_{-x,h,0}|R_0=h)=\sum_{x=1}^{\infty}\mathbb{P}(\tau_h'\ge x|R_0=h)=\mathbb{E}(\tau_h'|R_0=h)\leq Ch^{\frac{1}{2}}.
\end{eqnarray}
For $p\ge 1$, by the strong Markov property of $(R_n)$, we have
\begin{eqnarray*}
\sum_{x=1}^{\infty}\mathbb{P}(B_{-x,h,p}|R_0=h)&=&\mathbb{P}(R_{\tau_h'}=h|R_0=h)
\cdot\left(\sum_{x=1}^{\infty}\mathbb{P}(B_{-x,h,p-1}|R_0=h)\right)\\
&=&\cdots\\
&=&\left(\mathbb{P}(R_{\tau_h'}=h|R_0=h)\right)^p
\cdot\left(\sum_{x=1}^{\infty}\mathbb{P}(B_{-x,h,0}|R_0=h)\right).
\end{eqnarray*}
Then by (\ref{proof-lem-3.1-b}) and  Lemma \ref{Branching process1}, we get
\begin{eqnarray*}
\sum_{x=1}^{\infty}\mathbb{P}(B_{-x,h,p}|R_0=h)\le (Ch^{-\frac{1}{2}+\varepsilon})^p\cdot h^{\frac12}.
\end{eqnarray*}

(iii) By the definition of $A_{h,0}^{'}$ (i.e. (\ref{Ray-Knight representation-b})), and Lemma \ref{Side-lemma 3'}, we have
\begin{eqnarray}\label{proof-lemma-3.1-(iii)}
\mathbb{P}(A_{h,0}'|Y_0'=h)=\mathbb{P}(\sigma_h'=\infty|Y_0'=h)\leq Ch^{-\frac{1}{2}}.
\end{eqnarray}
For $p\geq 1$, by the strong Markov property of $(Y_n')$, we have
\begin{eqnarray*}
&&\mathbb{P}(A_{h,p}'|Y_0'=l)\\
&&=\mathbb{P}\left(\{\sigma_h'<\infty\}\cap
\{Y_{\sigma_h'}'=h\}|Y_0'=
l\right)\cdot \mathbb{P}(A_{h,p-1}'|Y_0'=h)\\
&&=\cdots\\
&&=\mathbb{P}\left(\{\sigma_h'<\infty\}\cap
\{Y_{\sigma_h'}'=h\}|Y_0'=
l\right)\cdot\left(\mathbb{P}\left(\{\sigma_h'<\infty\}\cap
\{Y_{\sigma_h'}'=h\}|Y_0'=
h\right)\right)^{p-1}\\
&&\quad\cdot \mathbb{P}(A_{h,0}'|Y_0'=h)\\
&&\leq \left(\mathbb{P}\left(\{\sigma_h'<\infty\}\cap
\{Y_{\sigma_h'}'=h\}|Y_0'=
h\right)\right)^{p-1}\cdot \mathbb{P}(A_{h,0}'|Y_0'=h).
\end{eqnarray*}
Then by Lemma \ref{Lemma 3.3(2)'} and (\ref{proof-lemma-3.1-(iii)}), we get
\begin{eqnarray*}
\mathbb{P}(A_{h,p}'|Y_0'=l)\le (Ch^{-\frac{1}{2}+\varepsilon})^p.
\end{eqnarray*}
The proof is complete.\hfill\fbox

\bigskip

\noindent {\bf Proof of Lemma \ref{proof heart2}.} (i) For $p=0$, by (\ref{Ray-Knight representation03-a}) and Lemma \ref{Side-lemma 3}, we have
\begin{eqnarray}\label{u-0-A}
P(C_{h,0}|Y_0=h)=P(\sigma_h=\infty|Y_0=h)<Ch^{-\frac{1}{2}}.
\end{eqnarray}
For $p\ge 1$, by the strong Markov property of $(Y_n)$, we have
\begin{eqnarray*}
P(C_{h,p}|Y_0=h)&=&P\left(\{\sigma_h<\infty\}\cap\{Y_{\sigma_h}=h\}|Y_0=h\right)\cdot P(C_{h,p-1}|Y_0=h)\\
&=&\cdots\\
&=&\left(P\left(\{\sigma_h<\infty\}\cap\{Y_{\sigma_h}=h\}|Y_0=h\right)\right)^p\cdot P(C_{h,0}|Y_0=h).
\end{eqnarray*}
Then by (\ref{u-0-A}) and Lemma \ref{Lemma 3.3(2)}, we get (i).

(ii) For $p=0$, by Lemma \ref{Side-lemma 4}, we have
\begin{eqnarray*}
\sum_{x=1}^{\infty}P(D_{x,h,0}|Z_0=h-1)=\sum_{x=1}^{\infty}P(\tau_h\ge x|Z_0=h-1)=E(\tau_h|Z_0=h-1)<Ch^{\frac12}.
\end{eqnarray*}
Thus (ii) holds in this case. Also by Lemma \ref{Side-lemma 4}, we have
\begin{eqnarray}\label{3.9}
\sum_{x=1}^{\infty}P(D_{x,h,0}|Z_0=h)=\sum_{x=1}^{\infty}P(\tau_h\ge x|Z_0=h)=E(\tau_h|Z_0=h)<Ch^{\frac12}.
\end{eqnarray}
For $p\ge 1$, by the strong Markov property of $(Z_n)$, we have
\begin{eqnarray*}
&&\sum_{x=1}^{\infty}P(D_{x,h,p}|Z_0=h-1)\\
&&=P\left(Z_{\tau_h}=h|Z_0=h-1\right)\cdot
\left(\sum_{x=1}^{\infty}P(D_{x,h,p-1}|Z_0=h)\right)\\
&&=\cdots\\
&&=P\left(Z_{\tau_h}=h|Z_0=h-1\right)\cdot\left(P\left(Z_{\tau_h}=h|Z_0=h\right)\right)^{p-1}\cdot
\left(\sum_{x=1}^{\infty}P(D_{x,h,0}|Z_0=h)\right).
\end{eqnarray*}
Then by Lemma \ref{Branching process}, Lemma \ref{Branching process h-1} and (\ref{3.9}), we have
\begin{eqnarray*}
\sum_{x=1}^{\infty}P(D_{x,h,p}|Z_0=h-1)\le\left(Ch^{-\frac12+\varepsilon}\right)^ph^{\frac12}.
\end{eqnarray*}

(iii) By the definition of $(C_{h,0}')$ (i.e. (2.17)) and Lemma \ref{Side-lemma 3'}, we have
\begin{eqnarray}\label{proof-Lem 3.2}
P(C_{h,0}'|Y_0'=h)=P(\sigma_h'=\infty|Y_0'=h)\leq Ch^{-\frac12}.
\end{eqnarray}
For $p\ge 1$, by the strong Markov property of $(Y_n')$,  we get
\begin{eqnarray*}
&&P(C_{h,p}'|Y_0'=l)\\
&&=P\left(\{\sigma_h'<\infty\}\cap\{Y_{\sigma_h'}'=h\}|Y_0'=l\right)\cdot P(C_{h,p-1}'|Y_0'=h)\\
&&=\cdots\\
&&=P\left(\{\sigma_h'<\infty\}\cap\{Y_{\sigma_h'}'=h\}|Y_0'=l\right)\cdot \left(P\left(\{\sigma_h'<\infty\}\cap\{Y_{\sigma_h'}'=h\}|Y_0'=h\right)\right)^{p-1}\\
&&\quad\cdot P(C_{h,0}'|Y_0'=h)\\
&&\leq \left(P\left(\{\sigma_h'<\infty\}\cap\{Y_{\sigma_h'}'=h\}|Y_0'=h\right)\right)^{p-1}\cdot P(C_{h,0}'|Y_0'=h).
\end{eqnarray*}
Then by Lemma \ref{Lemma 3.3(2)} and (\ref{proof-Lem 3.2}), we get
\begin{eqnarray*}
\mathbb{P}(C_{h,p}'|Y_0'=l)\le C(h^{-\frac12+\varepsilon})^{p}.
\end{eqnarray*}
The proof is complete.\hfill\fbox

\section{Remarks and open questions}

\begin{rem}
Similar to favorite downcrossing site process, we can define favorite upcrossing site process.
By the symmetry of one-dimensional simple symmetric random walk, we can get the corresponding results similar to Theorem \ref{mainthm-0} for favorite upcrossing site process.
\end{rem}

\begin{rem}
Define
$$
\xi^*(x,n):=\max\{\xi_U(x,n),\xi_D(x,n)\}.
$$
We call $x$  a favorite one-side site of the random walk at time $n$  if
$$
\xi^*(x,n)=\sup_{y\in \mathbb{Z}}\xi^*(y,n).
$$
The set of favorite one-side sites of the random walk at time $n$ is denoted by $\mathcal{K}^*(n)$. $(\mathcal{K}^*(n))_{n\ge 1}$ is called the favorite one-side site process of the one-dimensional simple symmetric random walk.

It seems that it is more diffucult to understand the process $(\mathcal{K}^*(n))_{n\ge 1}$ compared to the three processess $(\mathcal{K}_D(n))_{n\ge 1}, (\mathcal{K}(n))_{n\ge 1}$ or $(\mathcal{E}(n))_{n\ge 1}$. As to one-dimensional simple symmetric random walk, we introduce the following questions:
\end{rem}

{\bf Question 1.} With probability 1 there are only finitely many times at which there are at least four favorite one-side sites?

{\bf Question 2.} With probability 1 three favorite one-side sites occurs infinitely often?

{\bf Question 3.} Is the favorite one-side site process transient? If the answer is yes, how about the escape rate?

\bigskip
{ \noindent {\bf\large Acknowledgments}\ \ We thank the referee for helpful comments which helped improve the presentation of the paper. This work was supported by the National Natural Science Foundation of China (Grant Nos. 12171335, 12101429,  12071011, 11931004 and  11871184), the Simons Foundation (\#429343) and the Science Development Project of Sichuan University (2020SCUNL201).

\end{document}